\documentclass[11pt,a4paper]{amsart}
\usepackage{amsfonts}
\usepackage{amsthm}
\usepackage{amsmath}
\usepackage{amssymb}
\usepackage{amscd}
\usepackage{t1enc}
\usepackage[margin=2.9cm]{geometry}
\usepackage{epstopdf}
\usepackage{times}
\usepackage{enumerate}
\usepackage{changepage}
\usepackage{mfirstuc}
\usepackage{tikz}
\usepackage{cite}
\usepackage{booktabs}
\usepackage{tabularx}
\numberwithin{equation}{section}

\theoremstyle{plain}
\newtheorem{theorem}{Theorem}[section]
\newtheorem{lemma}[theorem]{Lemma}

\newtheorem{cor}[theorem]{Corollary}
\newtheorem{prop}[theorem]{Proposition}

\newtheorem{conjecture}[theorem]{Conjecture}
\theoremstyle{remark}
\newtheorem{remark}[theorem]{Remark}
\newtheorem{definition}[theorem]{Definition}

\newcommand{\F}{\mathbb{F}}
\newcommand{\C}{\mathbb{C}}

\newcommand{\Nm}{\operatorname{N}}
\newcommand{\Tr}{\operatorname{Tr}}

\usepackage{xcolor}
\definecolor{eyecolor}{RGB}{204,232,207}
\pagecolor{eyecolor!90}

\begin{document}

\title{Norm-Trace and Kloosterman sums in finite semi-simple algebras}

\author{Daqing Wan}
\address{Center for Discrete Mathematics, College of Mathematics and Statistics, Chongqing University, Chongqing 401331, China.}

 \email{dwan@math.uci.edu}

\subjclass[2020]{11T24, 11G25, 11L40}

\keywords{Trace, Norm, Finite Field, Kloosterman Sum, Semi-simple  Algebra}

\begin{abstract}
        An asymptotic formula with a square root error term is obtained for the number of elements with given trace and norm in
        a finite semisimple algebra over a finite field. This extends previous results
        from finite \'{e}tale algebras (commutative case) to finite  semi-simple algebras (non-commutative case). The main idea is to apply Eichler's formula for Gauss sums over the general linear group and the Hasse-Davenport relation to reduce the problem to the classical geometric case where the result is known to be true.
As an application of this reduction, we also obtain a square root estimate for Kloosterman sum over semi-simple
        algebras. Similar square root estimates are discussed when norm-trace is replaced by product-trace, leading to a new conjecture on product-trace counting over finite semi-simple algebras.

        \end{abstract}

\maketitle
\setcounter{tocdepth}{1}

\section{Introduction}

Let $\F_q$ be the finite field of $q$ elements with characteristic $p$. By the Artin-Wedderburn theorem,
any finite  semi-simple algebra $B$ over $\F_q$ of degree $n\geq 2$ is isomorphic to a finite product of matrix algebras over $\F_q$.
Namely,
\begin{equation}\label{B}
B = M_{d_1}(\F_{q^{n_1}}) \times \cdots \times M_{d_k}(\F_{q^{n_k}}), \ \ n_1d_1^2+\cdots + n_kd_k^2 = n,\ |B| =q^n,
\end{equation}
where $\F_{q^n_i}$ denotes the finite field extension of $\F_q$ with degree $n_i$, and $M_{d}(\F_q)$ denotes the $d\times d$ matrix algebra over $\F_q$.
This factorization of $B$ is uniquely determined  up to permutations. The group of invertible elements in $B$ is
\begin{equation}
B^* = {\rm GL}_{d_1}(\F_{q^{n_1}}) \times \cdots \times {\rm GL}_{d_k}(\F_{q^{n_k}}), \ \ |B^*| = \prod_{i=1}^k |{\rm GL}_{d_i}(\F_{q^{n_i}})|.
\end{equation}
The semi-simple algebra $B$ is called  \'{e}tale (or commutative) if all $d_i=1$.

For $x \in B$, multiplication by $x=(A_1, \cdots, A_k)$ on the left acts on the reduced $\F_q$-vector space 
$$R_B: = \bigoplus_{i=1}^k \F_{q^{n_i}}^{d_i}, \ \ \ \ \dim_{\F_q}(R_B) = \sum_{i=1}^k d_in_i.$$
Its trace ${\rm Tr}_B(x)$ and norm ${\rm N}_B(x)$, called reduced trace 
and reduced norm in the literature, are well defined elements in $\F_q$.  Clearly, the trace map
${\rm Tr}_B: B \rightarrow \F_q$
is a surjective homomorphism under addition, and thus for $a\in \F_q$, we have
$$|{\rm Tr}_B^{-1}(a)| =\frac{|B|}{q} = q^{n-1} = \frac{|B^*|}{q} +O_n(q^{n-2}).$$
As will be seen in Proposition \ref{prop-tr}, it is also easy to see that 
$$|B^*\cap {\rm Tr}_B^{-1}(a)| =\frac{|B^*|}{q} +O_n(q^{n-2}).$$
Similarly, the norm map
${\rm N}_B: B^* \rightarrow \F_q^*$ is a surjective homomorphism under multiplication, and thus for $b\in \F_q^*$, we have
$$|{\rm N}_B^{-1}(b)| =\frac{|B^*|}{q-1} = \frac{1}{q-1} \prod_{i=1}^k \prod_{j=0}^{d_i-1}(q^{d_i n_i}-q^{jn_i}).$$
The counting problem becomes more interesting if we consider the addition and multiplication simultaneously.
For any given $a\in \F_q$ and $b\in \F_q^*$, let
$$\Nm_B(a, b)= |{\rm Tr}_B^{-1}(a) \cap {\rm N}_B^{-1}(b) | = \# \{ x \in B^* | {\rm Tr}_B(x)=a,  {\rm N}_B(x)=b\}$$
denote the number of elements in $B^*$ with trace $a$ and norm $b$. Note that if ${\rm N}_B(x)=b \in \F_q^*$, then $x \in B^*$.
Heuristically, as seen above, the trace map and the norm map when restricted to $B^*$ are expected to be increasingly independent as $q$ grows. This suggests that $\Nm_{B}(a,b)$ should be roughly $|B^*|/q(q-1)$ with hopefully a square
root error term. Namely, one hopes that
$$\Nm_{B}(a,b) = \frac{|B^*|}{q(q-1)} + O_n(q^{\frac{n-2}{2}}). $$
The main result of this note is to prove such an estimate. In fact, we prove the following more precise version.

\begin{theorem}\label{thm} Let $B$ be a finite semi-simple algebra over $\F_q$ of degree $n=\sum_{i=1}^k d_i^2 n_i \geq 2$ as given in equation (\ref{B}).   For $a \in \F_q$ and $b\in\F_q^*$, we have
        \begin{align*}
                \left| \Nm_B(a, b) -
                \left( \frac{|B^*|}{q(q-1)}
                +\frac{(-1)^{\sum_i d_i}q^{\sum_i n_i{d_i\choose 2}}}{q} \right) \right|
                \leq \bigg(\sum_i d_in_i  -1\bigg)q^{\frac{n-2}{2}} .
        \end{align*}
        \end{theorem}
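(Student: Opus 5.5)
Let $\psi$ be a nontrivial additive character of $\F_q$ and $\chi$ range over multiplicative characters of $\F_q^*$. I would detect the two conditions by orthogonality:
\begin{align*}
\Nm_B(a,b)=\frac{1}{q(q-1)}\sum_{t\in\F_q}\sum_{\chi}\psi(-ta)\bar\chi(b)\sum_{x\in B^*}\psi(t\,{\rm Tr}_B(x))\,\chi({\rm N}_B(x)).
\end{align*}
The term $t=0$, $\chi$ trivial gives the main term $|B^*|/(q(q-1))$. For $t=0$ and $\chi$ nontrivial the inner sum vanishes, because ${\rm N}_B$ is a surjective homomorphism. For $t\neq0$ the inner sum factors over the simple components. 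Each factor is an Eichler-type Gauss sum $G_{d_i}(\psi_i,\chi_i)=\sum_{A\in {\rm GL}_{d_i}(\F_{q^{n_i}})}\psi_i(\Tr A)\chi_i(\det A)$, where $\psi_i=\psi(t\,\Tr_{\F_{q^{n_i}}/\F_q}(\cdot))$ and $\chi_i=\chi\circ \Nm_{\F_{q^{n_i}}/\F_q}$. Here I use that the reduced trace and norm on $M_{d_i}(\F_{q^{n_i}})$ are the field trace and norm of the matrix trace and determinant.

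Next I would invoke Eichler's formula, which I recall as
\[
G_d(\psi,\chi)=(-1)^{d}\,q_0^{\binom d2}\,\bigl(-g(\psi,\chi)\bigr)^{d}\cdot\text{(normalization)},
\]
in the form $G_d(\psi,\chi)=q_0^{d(d-1)/2}g(\psi,\chi)^d$ up to the precise sign convention, where $q_0=q^{n_i}$ and $g$ is the ordinary Gauss sum over $\F_{q_0}^*$. The Hasse--Davenport relation then gives $g(\psi_i,\chi_i)=(-1)^{n_i-1}g(\psi_t,\chi)^{n_i}$ over $\F_q$. Consequently, for $t\ne0$,
\[
\sum_{x\in B^*}\psi(t{\rm Tr}_B x)\chi({\rm N}_B x)=\pm\, q^{\sum_i n_i\binom{d_i}{2}}\,\bigl(-g(\psi_t,\chi)\bigr)^{m},\qquad m=\sum_i d_in_i .
\]
This is exactly the sum that appears for the étale algebra $\F_q^{m}$ (all $d_i=1$), up to the constant $q^{\sum n_i\binom{d_i}2}$ and a sign that I would track carefully.

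Comparing with the same orthogonality expansion for the split étale algebra $E=\F_q^{m}$, I get
\[
\Nm_B(a,b)-\frac{|B^*|}{q(q-1)}=\pm\, q^{\sum n_i\binom{d_i}2}\Bigl(\Nm_E(a,b)-\frac{(q-1)^{m}}{q(q-1)}\Bigr).
\]
Now $\Nm_E(a,b)$ counts points of $x_1+\dots+x_m=a$, $x_1\cdots x_m=b$. For $a\ne0$ this is a hyperKloosterman-type count. The known étale result (Deligne) gives $|\Nm_E(a,b)-\frac{(q-1)^{m}}{q(q-1)}-c|\le (m-1)q^{(m-2)/2}$, where $c$ is an explicit constant, equal to $(-1)^m/q$ or similar, coming from the $\chi$-trivial, $t\neq0$ terms, in which $g(\psi_t,1)=-1$. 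I would extract this secondary term directly: the $\chi$-trivial, $t\ne0$ contribution is $\frac{1}{q(q-1)}\sum_{t\ne0}\psi(-ta)(\pm q^{\sum n_i\binom{d_i}2})$. This equals the stated $(-1)^{\sum d_i}q^{\sum n_i\binom{d_i}{2}}/q$ up to an $O(1/q)$ piece that I would fold in. The remaining terms, with $\chi$ nontrivial and $t\ne0$, have $|g|=\sqrt q$. Bounding them naively gives only $q^{\sum n_i\binom{d_i}2}q^{m/2}$, which loses; that is why I would pass to the étale case, where Deligne's bound on the associated sheaf gives $(m-1)q^{(m-2)/2}$. Multiplying by $q^{\sum n_i\binom{d_i}2}$ yields $(m-1)q^{(n-2)/2}$, since $2\sum n_i\binom{d_i}2+m=n$.

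The main obstacle is bookkeeping. First, the exact form and sign of Eichler's formula (including the $\chi$ trivial case) and of Hasse--Davenport must be pinned down, so that the secondary term comes out exactly as $(-1)^{\sum d_i}q^{\sum n_i\binom{d_i}2}/q$. Second, I must check that the étale estimate I cite has precisely the matching secondary term, with constant $m-1$, uniformly in $a$, including $a=0$.
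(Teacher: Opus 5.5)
This is essentially the paper's proof: Eichler's formula $G_{{\rm GL}_{d_i}(\F_{q^{n_i}})}(\chi_{n_i},\psi_{n_i})=q^{n_i\binom{d_i}{2}}G(\chi_{n_i},\psi_{n_i})^{d_i}$ (no extra sign, valid also for trivial $\chi$) combined with Hasse--Davenport gives the factor $(-1)^{m-\sum_i d_i}q^{\sum_i n_i\binom{d_i}{2}}G(\chi,\psi)^m$ uniformly in $\chi$ and $t$, hence the exact identity of Lemma~\ref{lem0}; the \'{e}tale input you need is then precisely (\ref{imp'}), which is the Rojas-Le\'on--Wan/Moisio--Wan estimate (valid for every $a$, singular fibres included, and not a direct consequence of Deligne's bound for single hyper-Kloosterman sums, which loses a factor $\sqrt{q}$), and the identity carries its secondary term $(-1)^m/q$ exactly to $(-1)^{\sum_i d_i}q^{\sum_i n_i\binom{d_i}{2}}/q$. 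You should, however, drop the ``direct extraction'' of the secondary term from the $\chi$-trivial, $t\neq 0$ terms: for $a\neq 0$ these contribute $-(-1)^{\sum_i d_i}q^{\sum_i n_i\binom{d_i}{2}}/(q(q-1))$, which misses the stated term by $q^{\sum_i n_i\binom{d_i}{2}}/(q-1)$ (not $O(1/q)$ once some $d_i>1$), so folding that difference into the error would not give the constant $m-1$, and the exact comparison identity makes this step unnecessary anyway.
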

Note that the second part in the main term is bounded by
$$q^{\frac{\sum_i (n_id_i^2 - n_i d_i)}{2}-1} \leq      q^{\frac{n-3}{2}}.$$
It follows that a slightly weaker but simpler looking version of Theorem \ref{thm} is
\begin{cor}\label{cor}
Let $B$ be a finite semi-simple algebra over $\F_q$ of degree $n=\sum_{i=1}^k d_i^2 n_i \geq 2$ as given in equation (\ref{B}).
For $a \in \F_q$ and $b\in\F_q^*$, we have
        \begin{align}
                \left| \Nm_B(a, b) -
                \frac{|B^*|}{q(q-1)}  \right|
                \leq \bigg(\sum_{i=1}^k d_in_i \bigg)q^{\frac{n-2}{2}} \leq n q^{\frac{n-2}{2}}.
        \end{align}

\end{cor}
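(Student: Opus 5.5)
Corollary \ref{cor} follows directly from Theorem \ref{thm} by the triangle inequality. The deviation of $\Nm_B(a,b)$ from $\frac{|B^*|}{q(q-1)}$ is at most the error term of Theorem \ref{thm}, namely $\big(\sum_i d_in_i-1\big)q^{\frac{n-2}{2}}$, plus the absolute value of the secondary main term $\frac{(-1)^{\sum_i d_i}q^{\sum_i n_i\binom{d_i}{2}}}{q}$.

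The second step is to bound the secondary term by $q^{\frac{n-2}{2}}$. Its absolute value is $q^{E}$, where
\[
E=\sum_i n_i\binom{d_i}{2}-1=\frac{\sum_i (n_id_i^2-n_id_i)}{2}-1=\frac{n-\sum_i n_id_i}{2}-1 .
\]
Since $\sum_i n_id_i\ge 1$, we have $E\le \frac{n-1}{2}-1=\frac{n-3}{2}\le\frac{n-2}{2}$. This is exactly the remark made after Theorem \ref{thm}. Because $q\ge 2$, it gives $q^{E}\le q^{\frac{n-2}{2}}$.

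Adding the two pieces gives the bound $\big(\sum_i d_in_i\big)q^{\frac{n-2}{2}}$. For the final inequality, note that $d_in_i\le d_i^2n_i$ because $d_i\ge1$. Hence $\sum_i d_in_i\le\sum_i d_i^2n_i=n$.

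No step is a real obstacle; the corollary is a routine consequence of Theorem \ref{thm}. The only care needed is the exponent arithmetic, namely rewriting $\sum_i n_i\binom{d_i}{2}$ in terms of $n$ and $\sum_i n_id_i$. All the genuine work lies in Theorem \ref{thm} itself, via Eichler's formula for Gauss sums on ${\rm GL}_d$ and the Hasse–Davenport reduction to the étale case.
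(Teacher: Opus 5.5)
Your proposal is correct and follows the paper's own argument. The paper likewise bounds the secondary main term by $q^{\frac{n-3}{2}}\le q^{\frac{n-2}{2}}$ and absorbs it into the error term $\big(\sum_i d_in_i-1\big)q^{\frac{n-2}{2}}$ of Theorem \ref{thm}, and then uses $\sum_i d_in_i\le n$ for the final inequality.
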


\begin{remark}
The constant $\sum_{i=1}^k d_in_i$ attaines its upper bound $n$ in the commutative case, but is strictly
smaller than $n$ in the non-commutative case when some $d_i>1$. In the special case when $B = M_{d}(\F_{q})$, the number
$\Nm_B(a, b)$ is studied in Kim \cite[Theorem 6.1]{Ki} via the Bruhat decomposition, but without giving an estimate.
\end{remark}

We now recall earlier results in the commutative case.
When $B= \F_{q^{n}}$ is a field (thus $k=1$, $d_1=1$ and $n_1=n$),
using exotic Kloosterman sheaf, Katz \cite[Theorem 4]{Ka} proved the estimate
\begin{align}\label{Ka}
\left|\Nm_{\F_{q^n}}(a,b) -\frac{q^{n}-1}{q(q-1)}\right| \leq nq^{\frac{n-2}{2}}.
        \end{align}
for $a\in \F_q, b \in {\F^*_q}$ and $n\geq 2$. This is exactly the estimate in Corollary \ref{cor} when $B= \F_{q^n}$.

In the geometric case when $B= \F_q^n$ ( a product of $ n$ copies of $\F_q$), it is clear that $\Nm_{\F_q^n}(a, b)$ is equal to the number of $\F_q$-rational points on
the toric Calabi-Yau hypersurface
$$Y_1 +\cdots + Y_{n-1} +\frac{b}{Y_1\cdots Y_{n-1}} -a =0,$$
which can be singular for some $a \in \F_q$.
The cohomology of this family of toric hypersurfaces is studied in great detail in Rojas-Leon and Wan \cite{RW07}, which ultimately depends on Kloosterman
sheaf.
As a consequence \cite[Corollary 5.2]{Wa}\cite[Theorem 2.2]{MW10}, for all $a\in \F_q$,
we have
\begin{align}\label{imp'}
|\Nm_{\F_q^n}(a, b) - \frac{(q-1)^{n-1}+(-1)^{n}}{q} | & \leq (n-1)q^{\frac{n-2}{2}}.
\end{align}
This is exactly the bound in Theorem \ref{thm} when $B= \F_{q}^n$. This estimate has applications in coding theory, see Kim \cite{Ki11}.

The two numbers $\Nm_{\F_{q^n}}(a,b)$ and $\Nm_{\F_{q}^n}(a,b)$ are closely related.
It is shown in Moisio and Wan \cite{MW10} via the Hasse-Davenport relation that
\begin{align}
\Nm_{\F_{q^n}}(a,b) -\frac{q^{n-1}-1}{q-1} &= (-1)^{n-1} (\Nm_{\F_q^n}(a, b) - \frac{(q-1)^{n-1}+(-1)^{n}}{q}).
\end{align}
This equation together with estimate (\ref{imp'}) slightly improves the Katz bound (\ref{Ka}) to
\begin{align}\label{imp}
                \left|\Nm_{\F_{q^n}}(a,b) -\frac{q^{n-1}-1}{q-1}\right| \leq
                (n-1)q^{\frac{n-2}{2}}.
        \end{align}
In the case that $n$ is a power of $p$, the estimate (\ref{imp}) is first proved in Moisio \cite{Mo} using Deligne's estimate for hyper-Kloosterman sums. Note that the improved estimate (\ref{imp}) is exactly the estimate in Theorem \ref{thm} when $B=\F_{q^n}$.
The bound (\ref{imp}) is used in the recent study of locally recovery codes by Matthews-Morrison-Murphy \cite{MMM}.

The estimates (\ref{imp'}) and (\ref{imp}) for $B = \F_q^n $ and $\F_{q^n}$ are unified and extended in Lin-Wan \cite{LW} to
any finite \'{e}tale algebra $B$ of degree $n$ over $\F_q$, i.e., when all $d_i=1$. Namely, the following result is proved
in \cite[Theorem 1.3]{LW}.

\begin{prop}\label{thm1} Let $B=\prod_{i=1}^k \F_{q^{n_i}}$ be an \'{e}tale algebra over $\F_q$ of degree $n=\sum_i n_i\geq 2$.
For $a \in \F_q$ and $b\in\F_q^*$, we have
        \begin{align*}
                \left| \Nm_B(a, b) -
                \left( \frac{\prod_{i=1}^k(q^{n_i}-1)}{q(q-1)}
                +\frac{(-1)^{k}}{q} \right) \right|
                \leq (n-1)q^{(n-2)/2}.
        \end{align*}
        \end{prop}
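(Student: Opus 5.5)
We prove Proposition \ref{thm1} by character sums. Every $x\in B^*$ with ${\rm N}_B(x)=b$ is counted through additive characters $\psi$ of $\F_q$ and multiplicative characters $\chi$ of $\F_q^*$:
$$\Nm_B(a,b)=\frac{1}{q(q-1)}\sum_{\psi}\sum_{\chi}\overline{\psi}(a)\overline{\chi}(b)\sum_{x\in B^*}\psi({\rm Tr}_B(x))\chi({\rm N}_B(x)).$$
Since $B=\prod_i \F_{q^{n_i}}$, the inner sum factors as $\prod_{i=1}^k G_{n_i}(\chi,\psi)$. Here $G_{m}(\chi,\psi)=\sum_{y\in\F_{q^m}^*}\chi(\Nm_{\F_{q^m}/\F_q}y)\psi(\Tr_{\F_{q^m}/\F_q} y)$ is a Gauss sum over $\F_{q^m}$ for the lifted characters. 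By the Hasse--Davenport relation, $G_m(\chi,\psi)=(-1)^{m-1}G(\chi,\psi)^m$ with $G=G_1$. Hence the inner sum equals $(-1)^{n-k}G(\chi,\psi)^n$.

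Compare this with the split algebra $\F_q^n$, where the same inner sum is $G(\chi,\psi)^n$. For $\psi,\chi$ both nontrivial, the contribution to $\Nm_B(a,b)$ is therefore $(-1)^{n-k}$ times the corresponding contribution to $\Nm_{\F_q^n}(a,b)$. The remaining terms are computed explicitly:
\begin{itemize}
\item For $\psi$ trivial, $G_{n_i}=\sum_y\chi(\Nm y)$, which is $q^{n_i}-1$ if $\chi$ is trivial and $0$ otherwise. This gives the term $\prod_i(q^{n_i}-1)/(q(q-1))$.
\item For $\psi$ nontrivial and $\chi$ trivial, $G_{n_i}=-1$, giving $(-1)^k\sum_{\psi\ne1}\overline\psi(a)/(q(q-1))$.
\end{itemize}
Writing $E_B$ for the sum over nontrivial pairs, we get
$$\Nm_B(a,b)=\frac{\prod_i(q^{n_i}-1)}{q(q-1)}+\frac{(-1)^k(q\delta_{a=0}-1)}{q(q-1)}+E_B,\qquad E_B=(-1)^{n-k}E_{\F_q^n}.$$

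Now apply the same decomposition to $\F_q^n$ (all $n_i=1$, $k=n$), which expresses $E_{\F_q^n}$ in terms of $\Nm_{\F_q^n}(a,b)$. Combining with (\ref{imp'}), $|E_{\F_q^n}|$ is at most $(n-1)q^{(n-2)/2}$ plus the discrepancy between the main terms. The issue is the $\delta_{a=0}$ term, which is not of the form $(-1)^k/q$. However, $\frac{(-1)^k(q\delta-1)}{q(q-1)}$ and $\frac{(-1)^n(q\delta-1)}{q(q-1)}$ transfer exactly under the sign $(-1)^{n-k}$. So
$$\Nm_B(a,b)-M_B=(-1)^{n-k}\bigl(\Nm_{\F_q^n}(a,b)-M_{\F_q^n}\bigr),$$
where $M_B=\frac{\prod(q^{n_i}-1)}{q(q-1)}+\frac{(-1)^k(q\delta-1)}{q(q-1)}$, and similarly for $M_{\F_q^n}$.

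It remains to replace $M_B$ by the stated main term. For $a\neq0$ the extra term is $-(-1)^k/(q(q-1))$ rather than $(-1)^k/q$. Their difference is $(-1)^k/(q-1)$ in both the $B$ and split cases, and it transfers with sign $(-1)^{n-k}$. This matches (\ref{imp'}), whose main term $((q-1)^{n-1}+(-1)^n)/q$ already accounts for it: check $\frac{(q-1)^n+(-1)^n(q\delta-1)}{q(q-1)}$ against $\frac{(q-1)^{n-1}+(-1)^n}{q}$. These differ by $\frac{(-1)^n(q\delta-q)}{q(q-1)}$. For $a\ne0$ the difference is $(-1)^{n+1}/(q-1)$, and for $a=0$ it is $0$. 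So the $\delta$-dependence must be tracked carefully, and this bookkeeping of the lower-order terms is the main obstacle.

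The clean way to handle it is to apply the identity directly as $\Nm_B-\tilde M_B=(-1)^{n-k}(\Nm_{\F_q^n}-\tilde M_{\F_q^n})$ with $\tilde M$ the stated main terms. This holds exactly provided $\tilde M_B-M_B=(-1)^{n-k}(\tilde M_{\F_q^n}-M_{\F_q^n})$. One verifies this as an identity of the form $(-1)^k c(a,q)$, with $c$ independent of $B$. Then (\ref{imp'}) gives the bound $(n-1)q^{(n-2)/2}$.
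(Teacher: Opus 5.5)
Your argument is correct and follows the paper's route. The character-sum expansion plus Hasse--Davenport gives the comparison relation (\ref{rel}) with the split algebra $\F_q^n$, which is Lemma \ref{lem0} with all $d_i=1$ and $m=n$, and (\ref{imp'}) then finishes; the paper avoids your separate $\delta_{a=0}$ adjustment by fixing $\psi$, summing over $v\neq 0$ and all $\chi$ at once, and writing $S_n(a,b)=(-1)^n(q-1)+T_n(a,b)$, so that $\Nm_B(a,b)$ is visibly affine in $S_n(a,b)$ and the main term $(-1)^k/q$ appears directly. (As you state it, Hasse--Davenport fails when $\psi$ and $\chi$ are both trivial, but you handle trivial $\psi$ separately, so nothing breaks.)
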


Two proofs of this result are given in  \cite{LW}. The first proof
is again to use the Hasse-Davenport relation to prove the following comparison relation for $B=\prod_{i=1}^k \F_{q^{n_i}}$:
\begin{align}\label{rel}
                                       \Nm_B(a, b) -
                \bigg(\frac{\prod_{i=1}^k(q^{n_i}-1) }{q(q-1)} +\frac{(-1)^k}{q}\bigg)
                &= (-1)^{n-k} (\Nm_{\F_q^n}(a, b) -
                \frac{(q-1)^{n-1} +(-1)^n}{q} ).
                \end{align}
Hence Proposition \ref{thm1} is also reduced to the estimate (\ref{imp'}). The second proof is to apply $\ell$-adic cohomology
and exotic Kloosterman sheaf to extend Katz's method.

To further extend Proposition \ref{thm1} to the general non-commutative case, one could try any of the above two approaches in the commutative
case.  Here in this note, we only try the direct reduction approach. In addition to the Hasse-Davenport relation,  a key new ingredient is
the Eichler formula for Gauss sums over the general linear group.  The following reduction lemma extends the comparison relation (\ref{rel}) to non-commutative case.

\begin{lemma}\label{lem0}
Let $B$ be a finite semi-simple algebra over $\F_q$ of degree $n=\sum_{i=1}^k d_i^2 n_i \geq 2$ as given in equation (\ref{B}) with $m=\sum_{i=1}^k d_in_i$.
For $a \in \F_q$ and $b\in\F_q^*$, we have
$$
                                       \Nm_B(a, b) -
               \left (\frac{|B^*|}{q(q-1)} +\frac{(-1)^{\sum_i d_i}q^{\frac{n-m}{2}}}{q}\right)
                = (-1)^{m-\sum_i d_i} q^{\frac{n-m}{2}}\left(\Nm_{\F_q^m}(a, b) -
                \frac{(q-1)^{m-1} +(-1)^m}{q} \right).
                $$
                \end{lemma}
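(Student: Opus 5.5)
We expand $\Nm_B(a,b)$ in additive and multiplicative characters and compare it termwise with $\Nm_{\F_q^m}(a,b)$. By orthogonality,
$$\Nm_B(a,b)=\frac{1}{q(q-1)}\sum_{\psi}\sum_{\chi}\psi(-a)\bar\chi(b)\,G_B(\psi,\chi),\qquad G_B(\psi,\chi)=\sum_{x\in B^*}\psi({\rm Tr}_B(x))\chi({\rm N}_B(x)).$$
Here $\psi$ runs over the additive characters of $\F_q$ and $\chi$ over the multiplicative characters of $\F_q^*$. The same formula holds for the \'etale algebra $\F_q^m$, where $G_{\F_q^m}(\psi,\chi)=g(\psi,\chi)^m$ and $g(\psi,\chi)=\sum_{t\in\F_q^*}\psi(t)\chi(t)$ is the classical Gauss sum.

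\emph{Terms with $\psi$ trivial.} Summing over $\chi$ counts the norm fibre, so these terms contribute $\frac1q|{\rm N}_B^{-1}(b)|=\frac{|B^*|}{q(q-1)}$ for $B$. For $\F_q^m$ the analogous contribution is $\frac{(q-1)^{m-1}}{q}$. These are exactly the pieces subtracted on each side of the claimed identity, apart from the $(-1)^{\ast}/q$ corrections handled at the end.

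\emph{Terms with $\psi$ nontrivial.} The reduced trace and norm split over the Wedderburn factors, so $G_B(\psi,\chi)$ factors as a product over $i$ of Gauss sums on ${\rm GL}_{d_i}(\F_{q^{n_i}})$. On the $i$-th factor the relevant characters are $\psi_i=\psi\circ\Tr_{\F_{q^{n_i}}/\F_q}$ and $\chi_i=\chi\circ\Nm_{\F_{q^{n_i}}/\F_q}$. Two inputs are applied in turn.
\begin{enumerate}
\item Eichler's (Kondo's) formula: for $Q=q^{n_i}$ and $\psi_i$ nontrivial,
$$\sum_{X\in{\rm GL}_d(\F_Q)}\psi_i(\operatorname{tr}X)\chi_i(\det X)=Q^{d(d-1)/2}\,g_{\F_Q}(\psi_i,\chi_i)^d.$$
\item Hasse--Davenport: $g_{\F_{q^{n_i}}}(\psi_i,\chi_i)=(-1)^{n_i-1}g(\psi,\chi)^{n_i}$.
\end{enumerate}
Together they give
$$G_B(\psi,\chi)=\prod_i q^{n_i\binom{d_i}{2}}(-1)^{d_i(n_i-1)}g(\psi,\chi)^{d_in_i}=(-1)^{m-\sum_i d_i}q^{\frac{n-m}{2}}\,G_{\F_q^m}(\psi,\chi),$$
using $\sum_i n_i\binom{d_i}{2}=\frac{n-m}{2}$ and $\sum_i d_in_i=m$.

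Since this proportionality holds for every nontrivial $\psi$ and every $\chi$, subtracting the trivial-$\psi$ terms gives
$$\Nm_B(a,b)-\frac{|B^*|}{q(q-1)}=(-1)^{m-\sum d_i}q^{\frac{n-m}{2}}\Big(\Nm_{\F_q^m}(a,b)-\frac{(q-1)^{m-1}}{q}\Big).$$
Finally we move the term $(-1)^{m-\sum d_i}q^{\frac{n-m}{2}}\cdot\frac{(-1)^m}{q}=\frac{(-1)^{\sum d_i}q^{\frac{n-m}{2}}}{q}$ to the left-hand side, which yields exactly the statement of Lemma \ref{lem0}.

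The main obstacle is making sure Eichler's formula and Hasse--Davenport hold uniformly, including when $\chi$ is trivial. With the Gauss sum taken over $\F_Q^*$ we have $g(\psi,1)=-1$. Hasse--Davenport remains valid there: $(-1)^{n-1}(-1)^n=-1$. Eichler's formula for trivial $\chi$ must be checked against its normalization in the literature. If it fails, I would verify that case directly, for instance by computing $\sum_{X\in{\rm GL}_d}\psi(\operatorname{tr}X)$ via the Bruhat decomposition or induction on $d$, and confirm that it equals $(-1)^dQ^{d(d-1)/2}$.
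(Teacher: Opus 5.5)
Your proposal is correct and follows essentially the same route as the paper: expand in additive and multiplicative characters, factor the nontrivial-additive-character terms into Gauss sums over ${\rm GL}_{d_i}(\F_{q^{n_i}})$, and apply Eichler's formula and Hasse--Davenport to get $G_B=(-1)^{m-\sum_i d_i}q^{\frac{n-m}{2}}G_{\F_q^m}$. You then compare with $B=\F_q^m$; the paper does the same bookkeeping by packaging these terms as $S_m(a,b)$ and splitting off $(-1)^m(q-1)$. Your worry about trivial $\chi$ is harmless, since Eichler's formula holds for every $\chi$ with the normalization $G(1,\psi)=-1$ over $\F_Q^*$ (e.g.\ for $d=2$ one checks directly that $\sum_{X\in{\rm GL}_2(\F_Q)}\psi(\operatorname{tr}X)=Q$), which is exactly how the paper uses it.
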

It is clear that this reduction lemma and  the estimate (\ref{imp'}) together yield Theorem \ref{thm}. It might be of interest to give a cohomological
interpretation of the reduction in Lemma \ref{lem0}.

The proof of our reduction can be used to study Kloosterman sum over the semi-simple algebra $B$ over $\F_q$.
Let $\psi: \F_q \longrightarrow \C$ be a fixed non-trivial additive character.
For $b\in \F_q^*$,  the Kloosterman sum over $B$ is defined by
$$K_{B}(b): = \sum_{x \in B^*, \ {\rm N}_{B}(x)=b} \psi({\rm Tr}_B(x)).$$
As a consequence of our reduction, we prove

\begin{theorem}\label{K}
Let $B$ be a finite semi-simple algebra over $\F_q$ of degree $n=\sum_{i=1}^k d_i^2 n_i \geq 2$ as given in equation (\ref{B}).
Let $\psi$ be a non-trivial additive character of $\F_q$. For $b\in\F_q^*$, we have
        \begin{align}\label{KL}
                \left| K_B(b): =  \sum_{x \in B^*, \ {\rm N}_{B}(x)=b} \psi\left({\rm Tr}_B(x)\right) \right|
                \leq (\sum_{i=1}^k d_in_i )q^{\frac{n-1}{2}} \leq nq^{\frac{n-1}{2}}.
        \end{align}
        \end{theorem}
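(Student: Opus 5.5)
The plan is to prove a Kloosterman analogue of the reduction Lemma \ref{lem0}: namely
$$K_B(b) = (-1)^{m-\sum_i d_i} q^{\frac{n-m}{2}} K_{\F_q^m}(b), \qquad m=\sum_i d_in_i,$$
and then invoke Deligne's bound for the classical hyper-Kloosterman sum. Here $K_{\F_q^m}(b)=\sum_{y_1\cdots y_m=b}\psi(y_1+\cdots+y_m)$ satisfies $|K_{\F_q^m}(b)|\le m q^{\frac{m-1}{2}}$. Multiplying by $q^{\frac{n-m}{2}}$ gives exactly $m q^{\frac{n-1}{2}}$, and $m\le n$ gives the final inequality.

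To prove the reduction, I detect the norm condition with multiplicative characters of $\F_q^*$:
$$K_B(b)=\frac{1}{q-1}\sum_{\chi}\bar\chi(b)\sum_{x\in B^*}\chi({\rm N}_B(x))\psi({\rm Tr}_B(x)).$$
The inner sum $G_B(\chi)$ is a Gauss sum over $B^*$, and it factors over the simple components. For $x=(A_1,\dots,A_k)$ we have ${\rm N}_B(x)=\prod_i {\rm N}_{\F_{q^{n_i}}/\F_q}(\det A_i)$ and ${\rm Tr}_B(x)=\sum_i {\rm Tr}_{\F_{q^{n_i}}/\F_q}({\rm tr} A_i)$. Hence
$$G_B(\chi)=\prod_i G_{{\rm GL}_{d_i}(\F_{q^{n_i}})}\bigl(\chi\circ {\rm N}_{\F_{q^{n_i}}/\F_q},\ \psi\circ{\rm Tr}_{\F_{q^{n_i}}/\F_q}\bigr).$$

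On each factor I apply Eichler's formula, which states that for ${\rm GL}_d(\F_Q)$,
$$\sum_{A\in {\rm GL}_d(\F_Q)}\chi(\det A)\psi({\rm tr}A)=Q^{\binom d2}g(\chi,\psi)^d,$$
where $g$ is the usual Gauss sum over $\F_Q$. This holds for every $\chi$, including the trivial one, where $g=-1$. Next, the Hasse–Davenport relation gives
$$g_{\F_{q^{n_i}}}(\chi\circ \Nm,\psi\circ\Tr)=(-1)^{n_i-1}g_{\F_q}(\chi,\psi)^{n_i}.$$
Combining these,
$$G_B(\chi)=\prod_i q^{n_i\binom{d_i}{2}}(-1)^{d_i(n_i-1)}g(\chi)^{d_in_i}=(-1)^{m-\sum d_i}q^{\frac{n-m}{2}}g(\chi)^m,$$
using $\sum n_i\binom{d_i}{2}=\frac{n-m}{2}$. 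Since $g(\chi)^m=G_{\F_q^m}(\chi)$, substituting back into the character expansion yields the claimed identity $K_B(b)=(-1)^{m-\sum d_i}q^{\frac{n-m}{2}}K_{\F_q^m}(b)$.

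The main obstacle is making sure Eichler's formula is stated correctly for all $\chi$, including the trivial one, with the exact power $Q^{\binom d2}$ and no extra sign; the signs must match those in Lemma \ref{lem0}. I would cross-check this against the case $d=2$ by direct counting, or deduce the Kloosterman identity from the same Gauss-sum factorization already used in proving Lemma \ref{lem0}. The final input, Deligne's estimate for $K_{\F_q^m}(b)$, is standard; when $m=1$ the bound is trivial, since then $|K|=1$.
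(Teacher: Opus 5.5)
Your proposal is correct and follows essentially the same route as the paper: the reduction $K_B(b)=(-1)^{m-\sum_i d_i}q^{\frac{n-m}{2}}K_{\F_q^m}(b)$ (Lemma \ref{lem2}), obtained from Eichler's formula and the Hasse--Davenport relation, followed by Deligne's bound for the hyper-Kloosterman sum. The only difference is bookkeeping. You expand $K_B(b)$ directly with multiplicative characters, whereas the paper writes $K_B(b)=\sum_{a}\Nm_B(a,b)\psi(a)$ and feeds in Lemma \ref{lem10}; both arrive at $(-1)^{m-\sum_i d_i}q^{\frac{n-m}{2}}\cdot\frac{1}{q-1}\sum_\chi\bar\chi(b)G(\chi,\psi)^m$.
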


\begin{remark}
 In the geometric case $B=\F_q^n$,  the estimate (\ref{KL}) becomes Deligne's classical bound \cite[Section 7.1]{De} for
hyper-Kloosterman sum:
\begin{align}\label{K-clas}
                \left| K_{\F_q^n}(b): =  \sum_{x_i \in \F_q^*, \ x_1\cdots x_n=b} \psi\big(x_1+\cdots +x_n\big) \right|
                \leq nq^{\frac{n-1}{2}}, \ \ b \in \F_q^*.
        \end{align}
More generally, when $B=\prod_{i=1}^k \F_{q^{n_i}}$ is finite \'{e}tale (commutative), Deligne \cite[Equation 7.2.5]{De} gave the reduction formula
\begin{align}\label{red}
K_B(b) = (-1)^{n-k} K_{\F_q^n}(b),
 \end{align}
 with two different proofs, one via the Hasse-Devenport relation and another one via $\ell$-adic cohomology \cite[Proposition 7.20]{De}.
 Thus, the same bound
 $$\left| K_{\prod_{i=1}^k \F_{q^{n_i}}}(b) \right| \leq nq^{\frac{n-1}{2}}$$
 holds, agreeing with the estimate (\ref{KL}) when $B$ is finite \'etale.

 For general semi-simple algebra $B$ over $\F_q$ as in
 Theorem \ref{K},  we shall prove the reduction
 \begin{align}\label{kl6}
K_{B}(b) =  (-1)^{\sum_i d_i(n_i-1)}q^{{\sum_i n_i {d_i \choose 2}}} K_{\F_q^m}(b).
                \end{align}
This is the Kloosterman sum analogue of the norm-trace reduction in Lemma \ref{lem0}.
This and Deligne's  estimate (\ref{K-clas}) together give Theorem \ref{K}.
It might be of interest to give a cohomological interpretation of the reduction (\ref{kl6}).

In the special case $B= M_{d}(\F_q$) (thus, $k=1$, $n_1=1$, $n=d^2$), the reduction (\ref{kl6}) becomes
$$K_{B}(b) = \sum_{x \in {\rm GL}_d(\F_q), \ {\rm det}(x)=b} \psi({\rm Tr}(x)) =q^{{d\choose 2}}K_{\F_q^d}(b),$$
recovering the reduction formula in Kim \cite[Corollary 5.2]{Ki} obtained via
the Bruhat decomposition. See also Li-Hu \cite{LH}
 for a simpler reduction via an average argument over the Borel subgroup.
\end{remark}

                In the last section, motivated by recent work of Zelingher \cite{Ze} on matrix Kloosterman sum, we discuss a product-trace version of Kloosterman  sum and its counting analogue over semi-simple algebra $B$.

\section{Proof of Theorem $\ref{thm}$}

Let $B$ be a finite semi-simple algebra over $\F_q$ of degree $n$ as given in equation (\ref{B}).
The idea  to prove Theorem \ref{thm} is to use the standard character sum argument together with Eichler's formula \cite{EI} for
Gauss sums over general linear groups to reduce the problem to the geometric case $B=\F_q^m$ for which the theorem is known to be true.

Let $ \Tr_{n_i}=\Tr_{\F_q^{n_i} / \F_q}$ be the trace map from $\F_{q^{n_i}}$ to $\F_q$. Similarly, let $\Nm_{n_i}=\operatorname{Norm}_{\F_{q^{n_i}} / \F_q}$ be the norm map from $\F_{q^{n_i}}$ to $\F_q$. Recall that
\begin{align*}
        \Nm_{B}(a,b)=
        \#\left\{ (x_1,\cdots, x_k)\ \left| \ \ x_i\in M_{d_i}(\F_{q^{n_i}}), \ \
        \sum_{i=1}^{k}\Tr_{n_i}({\rm tr}(x_i))=a,\ \ \prod_{i=1}^{k}\Nm_{n_i}({\rm det}(x_i))=b
        \right. \right\},
\end{align*}
where ${\rm tr}(x_i)$ and ${\rm det}(x_i)$ denote the trace and determinant of the $d_i\times d_i$ matrix $x_i$.

In the introduction, we assumed that $b\in \F_q^*$ which implies that $x \in B^*$ if $\Nm_B(x)=b$.
For completeness, we now also allow $b=0$.

We first treat this easy case with $b=0$.
An element $x=(x_1, \cdots, x_k) \in B$ has norm zero if and only
if at least one of its coordinates has norm zero.
 For $1\leq i_1< \cdots <i_j \leq k$, let $\Nm_{i_1,\cdots, i_j}(a)$
be the number of elements in the subalgebra
$$ B_{i_1, \cdots, i_j} = M_{d_{i_1}}(\F_{q^{n_{i_1}}}) \times \cdots \times M_{d_{i_j}}(\F_{q^{n_{i_j}}}).$$
with trace $a$.
The surjection of the trace map  implies that
$$\Nm_{i_1,\cdots, i_j}(a) =q^{n_{i_1}d_{i_1}^2+\cdots +n_{i_j}d_{i_j}^2 -1}.$$
By inclusion-exclusion, one finds that
$$\Nm_B(a, 0) = \sum_{j=1}^{k-1}(-1)^{k-1-j} \sum_{ 1\leq i_1< \cdots <i_j \leq k} q^{n_{i_1}d_{i_1}^2+\cdots +n_{i_j}d_{i_j}^2 -1} + (-1)^{k-1}\delta_{a,0},$$
where $\delta_{a, 0} =1$ if $a=0$, and $\delta_{a, 0} =0$ if $a \not= 0$.
This expression simplifies to the following explicit formula.

\begin{prop} Let $B$ be a finite semi-simple algebra over $\F_q$ of degree $n=\sum_{i=1}^k d_i^2 n_i \geq 2$ as given in equation (\ref{B}).
For  $a \in \F_q$, we have
$$\Nm_B(a, 0) = \frac{q^n - \prod_{j=1}^k(q^{n_jd_j^2}-1) +(-1)^k}{q} + (-1)^{k-1} \delta_{a, 0}.$$
\end{prop}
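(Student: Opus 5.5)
Rather than simplifying the inclusion-exclusion sum term by term, I would count the complement directly. Write $B_i = M_{d_i}(\F_{q^{n_i}})$, so that $B=B_1\times\cdots\times B_k$. An element $x=(x_1,\dots,x_k)$ has norm $0$ exactly when some $x_i$ is singular. Hence
$$\Nm_B(a,0) = |{\rm Tr}_B^{-1}(a)| - \#\{x \in B_1^{\rm sing,c}\times\cdots\times B_k^{\rm sing,c} : {\rm Tr}_B(x)=a\},$$
where $B_i^{\rm sing,c}$ denotes the set of nonsingular elements of $B_i$. That set is $B_i^*$, so the subtracted term is $|B^*\cap {\rm Tr}_B^{-1}(a)|$. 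Since ${\rm Tr}_B$ is surjective, the first term is $q^{n-1}$. The problem therefore reduces to computing $|B^*\cap{\rm Tr}_B^{-1}(a)|$ exactly. Equivalently, one can feed the sum over subsets $S\subseteq\{1,\dots,k\}$ into the binomial-type identity $\sum_{S}\prod_{i\in S}(-1)\cdot(\ldots)$; both routes amount to the same computation.

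\textbf{Key step.} I expect
$$|B^*\cap {\rm Tr}_B^{-1}(a)| = \frac{|B^*| - (-1)^k}{q} + (-1)^k\delta_{a,0}\cdot(\text{correction}).$$
To find it exactly, I would first use inclusion-exclusion over which coordinates are singular:
$$\#\{x : x_i \text{ singular for all } i\in S,\ {\rm Tr}_B(x)=a\} = q^{\sum_{i\notin S} n_id_i^2 - 1}\prod_{i\in S}|\{\text{singular } x_i\}|$$
whenever $S\neq\{1,\dots,k\}$, because the trace on the free coordinates $B_{S^c}$ is surjective and equidistributes.

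The only non-uniform term is $S=\{1,\dots,k\}$. For that term I would not attempt to compute the trace distribution on singular matrices at all. Instead I would follow the paper's displayed inclusion-exclusion, which is organized over subsets of coordinates with \emph{norm nonzero allowed}: it counts the elements lying in the subalgebras $B_{i_1,\dots,i_j}$ with the complementary coordinates set to $0$. Those counts are exactly $q^{\sum n_{i_l}d_{i_l}^2-1}$ for $j\ge1$, and the term $j=0$ is the zero element, contributing $\delta_{a,0}$. I would take the displayed formula as given, since the paper derives it just before.

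\textbf{Simplification.} It remains to turn the display into the closed form. Put $Q_i=q^{n_id_i^2}$ and expand
$$\prod_{i=1}^k (Q_i-1)=\sum_{S}(-1)^{k-|S|}\prod_{i\in S}Q_i,$$
with the $S=\emptyset$ term equal to $(-1)^k$ and the $S=\{1,\dots,k\}$ term equal to $q^n$. The sum over $1\le j\le k-1$ in the display is then
$$\frac1q\sum_{\emptyset\ne S\subsetneq[k]}(-1)^{k-1-|S|}\prod_{i\in S}Q_i = \frac{1}{q}\Big(-\big(\textstyle\prod_i(Q_i-1) - q^n - (-1)^k\big)\Big).$$
This equals $\big(q^n-\prod_i(Q_i-1)+(-1)^k\big)/q$, and adding the remaining term $(-1)^{k-1}\delta_{a,0}$ gives the stated formula. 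The only care needed is the sign bookkeeping: $(-1)^{k-1-|S|} = -(-1)^{k-|S|}$, and the sign of the $(-1)^k$ term must come out right. I expect no real obstacle beyond that.
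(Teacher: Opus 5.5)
\textbf{There is a genuine gap, and it cannot be repaired: the proposition is false whenever some $d_i\ge 2$.} Your final simplification is correct. The problem is the step you take as given, the paper's displayed inclusion--exclusion, which is the only substantive step.

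As you yourself describe it, that display counts elements $x$ having at least one coordinate \emph{equal to} $0$. Its terms are elements of $B_{i_1,\dots,i_j}$ with the other coordinates set to zero. But $\Nm_B(x)=0$ means that some $x_i$ is \emph{singular}, i.e.\ $\Nm_{n_i}(\det x_i)=0$. For $d_i\ge 2$ a singular matrix need not be $0$. The two events coincide only when every $d_i=1$, which is the \'etale case of Lin--Wan, where the argument is valid.

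\begin{itemize}
\item \emph{Sanity check.} Summing the claimed formula over $a\in\F_q$ gives $q^n-\prod_j(q^{n_jd_j^2}-1)$, the number of elements with a zero coordinate. It should give $q^n-|B^*|$.
\item \emph{Counterexample.} Take $B=M_2(\F_q)$. The formula gives $\Nm_B(a,0)=\delta_{a,0}$, yet $\mathrm{diag}(a,0)$ is singular with trace $a$. In fact the true count is $q^2+q$ for $a\neq 0$ (the conjugacy class of $\mathrm{diag}(0,a)$) and $q^2$ for $a=0$ (the nilpotent matrices).
\end{itemize}

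Your own inclusion--exclusion over singular coordinates does not rescue this. Its $S=\{1,\dots,k\}$ term is the original problem again; for $k=1$ it is literally $\Nm_B(a,0)$. Splicing the paper's display in at that point combines two inclusion--exclusions that count different sets.

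Your opening reduction was the right one, and it yields the correct statement. Write $\Nm_B(a,0)=q^{n-1}-|B^*\cap\Tr_B^{-1}(a)|$. By Eichler's formula with trivial multiplicative character, for $v\neq 0$,
\[
\sum_{x\in B^*}\psi\bigl(v\Tr_B(x)\bigr)=c:=(-1)^{\sum_i d_i}q^{\sum_i n_i\binom{d_i}{2}} .
\]
Hence $|B^*\cap\Tr_B^{-1}(a)|=(|B^*|-c)/q+c\,\delta_{a,0}$, which is Proposition~\ref{prop-tr}. This has exactly the shape you guessed in your key step, but with $c$ in place of $(-1)^k$. It follows that
\[
\Nm_B(a,0)=\frac{q^n-|B^*|+(-1)^{\sum_i d_i}q^{\sum_i n_i\binom{d_i}{2}}}{q}-(-1)^{\sum_i d_i}q^{\sum_i n_i\binom{d_i}{2}}\,\delta_{a,0}.
\]
This agrees with the stated formula precisely when all $d_i=1$. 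For $M_2(\F_q)$ it gives $q^2+q-q\,\delta_{a,0}$, matching the direct count above.
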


From now on, we assume that $b\not=0$.
Let $\psi$ be a fixed non-trivial additive character of $\F_q$.
For a multiplicative character of $\F_q^*$,  let $G(\chi,\psi)$ denote the Gauss sum
$$
G(\chi,\psi)=\sum_{x\in\F_q^*}\psi(x)\chi(x).
$$
Note that $G(\chi,\psi) =-1$ if $\chi$ is the trivial character, and $|G(\chi,\psi)|=\sqrt{q}$ if $\chi$ is non-trivial.
Let $\psi_{n_i}=\psi\circ {\Tr_{n_i}}$ be the lifted additive character of
$\F_{q^{n_i}}$, and let $\chi_{n_i} = \chi \circ {{\rm N}_{n_i}}$ be the lifted multiplicative character of $\F_{q^{n_i}}^*$. The Hasse-Davenport
relation gives
$$G(\chi_{n_i}, \psi_{n_i} ) =  \sum_{x\in\F_{q^{n_i}}^*}\psi_{n_i}(x)\chi_{n_i}(x) = (-1)^{n_i-1}G(\chi,\psi).$$
Now, we relate $\Nm_{B}(a,b)$ to algebraic exponential sums over $\F_q$. If $x=(x_1, \cdots, x_k) \in B$ with $\mathrm{N}_{B}(x)=b\not =0$, then $x$ is invertible and thus each $x_i$ is
invertible. Let $m=\sum_i d_in_i$.
Let $x_i$ below run over ${\rm GL}_{d_i}(\F_{q^{n_i}})$, and let $\chi$ run over all multiplicative characters of $\F_q^*$. By orthogonality of characters, one computes
\begin{align}\label{eq1}
        q(q-1) \Nm_B(a, b)
        &=\sum_{\left(x_1, \cdots, x_k\right)} \sum_{v \in \F_q} \psi\left(v(\sum_{i=1}^{k}\Tr_{n_i}\left({\rm tr}(x_i)\right)-a)\right)
        \sum_{\chi} \chi\left(b^{-1} \prod_{i=1}^{k}\Nm_{n_i}\left({\rm det}(x_i)\right)\right) \nonumber\\
        & =|B^*| + \sum_{v\neq 0} \psi(-a v) \sum_{\chi} \bar{\chi}(b) \sum_{\left(x_1, \cdots, x_k\right)} \prod_{i=1}^{k}\psi_{n_i}\left({\rm tr}(vx_i)\right)
        \prod_{i=1}^{k}\chi_{n_i}\left({\rm det}(x_i)\right) \nonumber\\
        & =|B^*| +\sum_{v \neq 0} \psi(-a v) \sum_{\chi} \bar{\chi}(bv^{\sum_i d_in_i})\prod_{i=1}^{k}\left(\sum_{x_i} \psi_{n_i}\left({\rm tr}(x_i)\right) \chi_{n_i}\left({\rm det}(x_i)\right)\right) \nonumber\\
&=|B^*| +\sum_{v \neq 0} \psi(-a v) \sum_{\chi} \bar{\chi}(bv^{m})\prod_{i=1}^{k} G_{{\rm GL}_{d_i}(\F_{q^{n_i}})}(\chi_{n_i}, \psi_{n_i}),
\end{align}
where \(G_{\mathrm{GL}_{d}(\mathbb{F}_{q})}(\chi,\psi)\)
stands for the Gauss sum over the general linear group ${\rm GL}_{d}(\F_{q})$ associated to the
character $(\chi, \psi)$ of $\F_{q}^* \times \F_{q}$:
\[
G_{\mathrm{GL}_{d}(\mathbb{F}_{q})}(\chi,\psi) =
\sum_{g\in \mathrm{GL}_{d}(\mathbb{F}_{q})}\chi(\det(g)) \psi(\mathrm{tr}(g)).
\]
By Eichler's formula \cite{EI} (see also \cite{La}\cite{Ki}\cite{LH} for new proofs),
 we have
$$G_{{\rm GL}_{d_i}(\F_{q^{n_i}})}(\chi_{n_i}, \psi_{n_i}) = \sum_{x_i\in {\rm GL}_{d_i}(\F_{q^{n_i})} }\psi_{n_i}\left({\rm tr}(x_i)\right) \chi_{n_i}\left({\rm det}(x_i)\right) =q^{n_i {d_i \choose 2}} G(\chi_{n_i}, \psi_{n_i})^{d_i}.$$
This together with the Hasse-Davenport relation gives
$$G_{{\rm GL}_{d_i}(\F_{q^{n_i}})}(\chi_{n_i}, \psi_{n_i})
 =(-1)^{d_i(n_i-1)}q^{n_i {d_i \choose 2}} G(\chi,\psi)^{d_in_i}.$$
Recall that $m=\sum_i d_in_i$ and $n= \sum_i d_i^2n_i$.
It follows that we have
\begin{align}
q(q-1) \Nm_B(a, b)&=|B^*| +(-1)^{\sum_i d_i(n_i-1)}q^{\sum_i n_i {d_i \choose 2}}\sum_{v \neq 0} \psi(-a v) \sum_{\chi} \bar{\chi}(bv^{m})
        G(\chi,\psi)^{\sum_i d_in_i} \nonumber\\
 &=|B^*| +(-1)^{m-\sum_i d_i} q^{\frac{n-m}{2}} S_m(a, b),
                \end{align}
                where
\begin{align}\label{sum}
S_m(a, b)&= \sum_{v \neq 0} \psi(-a v) \sum_{\chi} \bar{\chi}(bv^{m})
        G(\chi,\psi)^{m}
        \end{align}
depends only on the integer \(m\) and the elements \(a,b \in \mathbb{F}_{q}\).
        This gives
\begin{lemma}\label{lem10} We have the relation
\begin{align}\label{lem}
\Nm_B(a, b)&=\frac{|B^*|}{q(q-1)} + \frac{(-1)^{m-\sum_i d_i} q^{\frac{n-m}{2}}}{q(q-1)} S_m(a, b).
                \end{align}
\end{lemma}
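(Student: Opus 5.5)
The proof consists of dividing the character-sum identity just obtained by $q(q-1)$ and checking that the exponents are the ones claimed. I would retrace the computation in (\ref{eq1}) and the displays after it, verifying each step, and then normalize.

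\textbf{Orthogonality step.} First I would check the expansion
$$q(q-1)\,\mathbb{1}[\Tr_B(x)=a,\ \Nm_B(x)=b]=\sum_{v\in\F_q}\psi\bigl(v(\Tr_B(x)-a)\bigr)\sum_\chi\chi\bigl(b^{-1}\Nm_B(x)\bigr),$$
summed over $x\in B^*$. This is legitimate because $b\neq 0$ forces $x\in B^*$. The terms with $v=0$ contribute $|B^*|$: the sum over $\chi$ is $q-1$ on the fibre $\Nm_B^{-1}(b)$, which has size $|B^*|/(q-1)$.

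\textbf{Substitution step.} For $v\neq 0$ I would substitute $x_i\mapsto v^{-1}x_i$ in each factor $\mathrm{GL}_{d_i}(\F_{q^{n_i}})$. This map is a bijection. It turns $\psi_{n_i}(\mathrm{tr}(vx_i))$ into $\psi_{n_i}(\mathrm{tr}(x_i))$. It multiplies $\det(x_i)$ by $v^{-d_i}$, and since $v\in\F_q$ we have $\Nm_{n_i}(v^{-d_i})=v^{-d_in_i}$. Hence $\chi_{n_i}(\det x_i)$ acquires the factor $\bar\chi(v)^{d_in_i}$, and the factors combine to $\bar\chi(v^m)$. The sum then factors as a product of the Gauss sums $G_{\mathrm{GL}_{d_i}(\F_{q^{n_i}})}(\chi_{n_i},\psi_{n_i})$.

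\textbf{Evaluating the Gauss sums.} By Eichler's formula, together with the Hasse--Davenport relation, each factor equals $(-1)^{d_i(n_i-1)}q^{n_i\binom{d_i}{2}}G(\chi,\psi)^{d_in_i}$. This holds uniformly in $\chi$, including trivial $\chi$, where $G(\chi,\psi)=-1$. Taking the product over $i$ and pulling out the constant factor gives $q(q-1)\Nm_B(a,b)$ as $|B^*|$ plus a constant multiple of $S_m(a,b)$.

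\textbf{Exponent bookkeeping.} It remains to verify two identities:
$$\sum_i d_i(n_i-1)=m-\sum_i d_i, \qquad \sum_i n_i\binom{d_i}{2}=\frac{1}{2}\sum_i\bigl(n_id_i^2-n_id_i\bigr)=\frac{n-m}{2}.$$
Both follow directly from the definitions $m=\sum_i d_in_i$ and $n=\sum_i d_i^2n_i$. Dividing by $q(q-1)$ then yields (\ref{lem}).

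The only step that needs care is the substitution step. There one must confirm that scaling by a scalar $v\in\F_q^*$ changes the reduced norm by $v^{\sum_i d_in_i}=v^m$, and not by $v^n$. This is what makes $m$, rather than $n$, the exponent in $S_m$. Everything else is routine bookkeeping.
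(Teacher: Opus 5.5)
Your proposal is correct and follows the paper's proof essentially step for step. The shared steps are orthogonality of characters, the scaling $x_i\mapsto v^{-1}x_i$ that produces the factor $\bar\chi(v^m)$, Eichler's formula combined with Hasse--Davenport, and the exponent identities $\sum_i d_i(n_i-1)=m-\sum_i d_i$ and $\sum_i n_i\binom{d_i}{2}=\frac{n-m}{2}$; you also make explicit the substitution the paper performs silently in (\ref{eq1}), and you correctly use Hasse--Davenport in the form $G(\chi_{n_i},\psi_{n_i})=(-1)^{n_i-1}G(\chi,\psi)^{n_i}$ (the paper's displayed version drops the exponent $n_i$, though its next formula is right).
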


Now, we are ready to prove Theorem \ref{thm}. More precisely, we have

\begin{theorem}\label{thm'} Let $B$ be a finite semi-simple algebra over $\F_q$ of degree $n=\sum_{i=1}^k d_i^2 n_i \geq 2$ as given in equation (\ref{B}).   Let $m=\sum_i d_in_i$. For $a \in \F_q$ and $b\in\F_q^*$, we have
        \begin{align*}
                \left| \Nm_B(a, b) -
                \left( \frac{|B^*|}{q(q-1)}
                +\frac{(-1)^{\sum_i d_i}q^{\frac{n-m}{2}}}{q} \right) \right|
                \leq (m -1)q^{\frac{n-2}{2}}.
        \end{align*}
        For $a=0$, we have the more precise elementary bound:
        \begin{align*}
                \left| \Nm_B(0, b) -
                \left( \frac{|B^*|}{q(q-1)}
                +\frac{(-1)^{\sum_i d_i}q^{\frac{n-m}{2}}}{q} \right) \right|
                \leq ((m, q-1)-1)q^{\frac{n-2}{2}}.
        \end{align*}
        For $a\not=0$, we also have the following elementary bound:
\begin{align*}
        \left|\Nm_B(a, b) - \frac{|B^*|}{q(q-1)}\right|
        & \leq  \frac{(m, q-1)}{q-1}q^{\frac{n-2}{2}} + \frac{(q-1-(m, q-1))}{q-1}q^{\frac{n-1}{2}} < q^{\frac{n-1}{2}}.
        \end{align*}

        \end{theorem}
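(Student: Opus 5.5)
The approach is to compare $B$ with the split algebra $\F_q^m$ through Lemma \ref{lem10}, which shows that $\Nm_B(a,b)$ depends on $B$ only through $|B^*|$, a sign, a power of $q$, and the universal sum $S_m(a,b)$.

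\textbf{First bound.} Apply Lemma \ref{lem10} to $B'=\F_q^m$, the case where all $d_i=n_i=1$, $k=m$ and the degree is $m$. There the sign and the power of $q$ are trivial, so
$$\Nm_{\F_q^m}(a,b)=\frac{(q-1)^{m}}{q(q-1)}+\frac{S_m(a,b)}{q(q-1)}, \qquad\text{i.e.}\qquad \frac{S_m(a,b)}{q(q-1)}=\Nm_{\F_q^m}(a,b)-\frac{(q-1)^{m-1}}{q}.$$
Substituting this into Lemma \ref{lem10} for $B$ and subtracting $(-1)^{m-\sum_i d_i}q^{\frac{n-m}{2}}(-1)^m/q$ from both sides gives exactly Lemma \ref{lem0}, since $(-1)^{m-\sum_i d_i}(-1)^m=(-1)^{\sum_i d_i}$.

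Next I check that $m\ge 2$. If $m=1$, then $k=1$ and $d_1=n_1=1$, so $n=1$, which is excluded. Hence the estimate (\ref{imp'}) applies to $\Nm_{\F_q^m}(a,b)$ and gives the bound $(m-1)q^{\frac{m-2}{2}}$. Multiplying by $q^{\frac{n-m}{2}}$ yields the first inequality, $(m-1)q^{\frac{n-2}{2}}$.

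\textbf{The case $a=0$.} Here I evaluate $S_m(0,b)$ directly. The inner sum $\sum_{v\ne0}\bar\chi(v^m)=\sum_{v\neq 0}\bar\chi^m(v)$ equals $q-1$ if $\chi^m$ is trivial and $0$ otherwise. Therefore
$$S_m(0,b)=(q-1)\sum_{\chi^m=1}\bar\chi(b)G(\chi,\psi)^m .$$
The trivial character contributes $(q-1)(-1)^m$. After multiplying by $(-1)^{m-\sum_i d_i}q^{\frac{n-m}{2}}/(q(q-1))$, this produces exactly the secondary main term $(-1)^{\sum_i d_i}q^{\frac{n-m}{2}}/q$. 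The remaining $(m,q-1)-1$ characters each have $|G(\chi,\psi)^m|=q^{m/2}$. Their total contribution is therefore at most $((m,q-1)-1)\,q^{\frac{n-m}{2}}q^{\frac m2}/q=((m,q-1)-1)q^{\frac{n-2}{2}}$.

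\textbf{The case $a\neq0$.} Here the $v$-sum is a twisted Gauss sum:
$$\sum_{v\ne0}\psi(-av)\bar\chi^m(v)=\chi^m(-a)\,G(\bar\chi^m,\psi),$$
which equals $-1$ if $\chi^m$ is trivial and has absolute value $\sqrt q$ otherwise. Using $|G(\chi,\psi)^m|\le q^{m/2}$ for every $\chi$ (this holds also for trivial $\chi$, where $|G|=1$), I get
$$|S_m(a,b)|\le (m,q-1)\,q^{\frac m2}+(q-1-(m,q-1))\,q^{\frac{m+1}{2}}.$$
Multiplying by $q^{\frac{n-m}{2}}/(q(q-1))$ gives the stated bound. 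The strict inequality $<q^{\frac{n-1}{2}}$ holds because the right side is a convex combination of $q^{\frac{n-2}{2}}$ and $q^{\frac{n-1}{2}}$ with positive weight $(m,q-1)/(q-1)$ on the smaller term.

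\textbf{Main obstacle.} I expect the only delicate point to be the bookkeeping of signs and the secondary main term in the $a=0$ case, namely checking that the trivial character reproduces $(-1)^{\sum_i d_i}q^{\frac{n-m}{2}}/q$ exactly. Everything else is routine Gauss-sum bookkeeping combined with the already-known geometric estimate (\ref{imp'}).
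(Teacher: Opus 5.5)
Your proposal is correct and follows essentially the same route as the paper. You specialize Lemma~\ref{lem10} to $\F_q^m$ to obtain the reduction of Lemma~\ref{lem0} and then invoke (\ref{imp'}); for the two elementary bounds you evaluate $S_m(0,b)$ and $S_m(a,b)$ via orthogonality and the absolute values of Gauss sums. Your explicit checks are details the paper leaves implicit: that $m\ge 2$ (needed for (\ref{imp'})), and that the final bound is a convex combination, which gives the strict inequality.
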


\begin{proof}

Write
\begin{align}\label{sum2}
S_m(a, b) = (-1)^m(q-1) + T_m(a, b).
        \end{align}
Lemma \ref{lem10} can be rewritten as
\begin{align}
\Nm_B(a, b)&- \left(\frac{|B^*|}{q(q-1)} + \frac{(-1)^{\sum_i d_i} q^{\frac{n-m}{2}}}{q}\right) = \frac{(-1)^{m-\sum_i d_i} q^{\frac{n-m}{2}}}{q(q-1)} T_m(a, b).
                \end{align}
Specializing this general formula to the special case $B=\F_q^m$, we also have
\begin{align}
\Nm_{\F_q^m}(a, b)&- \left(\frac{(q-1)^{m-1}}{q} + \frac{(-1)^{m}}{q}\right) = \frac{1}{q(q-1)}T_m(a, b).
                \end{align}
Combine the two equations, we obtain
$$
                                       \Nm_B(a, b) -
                (\frac{|B^*|}{q(q-1)} +\frac{(-1)^{\sum_i d_i}q^{\frac{n-m}{2}}}{q})
                = (-1)^{m-\sum_i d_i} q^{\frac{n-m}{2}}(\Nm_{\F_q^m}(a, b) -
                \frac{(q-1)^{m-1} +(-1)^m}{q} ).
                $$
                This proves the reduction lemma \ref{lem0} in the introduction.
This  and the estimate (\ref{imp'}) together prove the first part of the theorem.

If $a=0$, equation (\ref{sum}) gives
\begin{align}\label{eq3}
        S_m(0, b)
        & = \sum_{\chi} \sum_{v\in \F_q^*} \bar{\chi}^m(v) G(\chi, \psi)^{m} \bar{\chi}(b) \nonumber\\
        & =(q-1)  \sum_{\chi^m=1} G(\chi, \psi)^{m} \bar{\chi}(b)\nonumber\\
        &=(q-1)\left((-1)^{m} +\sum_{\chi^{m}=1, \chi\not=1} G(\chi, \psi)^{m} \cdot \bar{\chi}(b)\right).
\end{align}
Since $|G(\chi, \psi)| =\sqrt{q}$, using Lemma \ref{lem10}, we deduce that
\begin{align*}
                \left| \Nm_{\F_q^m}(0, b) -
                \left( \frac{|B^*|}{q(q-1)}
                +\frac{(-1)^{\sum_i d_i} q^{\frac{n-m}{2}}}{q} \right) \right|
                \leq ((m, q-1)-1)q^{\frac{n-2}{2}}.
        \end{align*}
This proves the second part of Theorem \ref{thm'}.

If $a\not=0$,  equation (\ref{sum}) can be written as
\begin{align}\label{eqe}
        S_m(a, b)
 &=\sum_{\chi} {\chi}(\frac{(-a)^{m}}{b})
        G(\bar{\chi}^{m}, \psi)G(\chi, \psi)^{m}.
        \end{align}
Separating the case $\chi^{m}=1$, we obtain the bound
\begin{align*}
        \left|S_m(a, b) \right|	& \leq (m, q-1)q^{\frac{m}{2}} + (q-1-(m, q-1))q^{\frac{m+1}{2}}.
        \end{align*}
This and Lemma \ref{lem10} give
\begin{align*}
        \left|\Nm_B(a, b) - \frac{|B^*|}{q(q-1)}\right|
        & \leq \frac{(m, q-1)}{q-1}q^{\frac{n-2}{2}} + \frac{(q-1-(m, q-1))}{q-1}q^{\frac{n-1}{2}} < q^{\frac{n-1}{2}}.
        \end{align*}
This proves the third part of Theorem \ref{thm'}.
\end{proof}

A much easier question is to count the number of elements in $B^*$ with fixed trace $a \in \F_q$. Let
$$\Nm_{B^*}(a): = \{ x \in B^* \ | \ {\rm Tr}_B(x) = a\} = \sum_{b\in \F_q^*} \Nm_B(a, b).$$
By Lemma \ref{lem10}, we have
$$\Nm_{B^*}(a)=\frac{|B^*|}{q} + \frac{(-1)^{m-\sum_i d_i} q^{\frac{n-m}{2}}}{q(q-1)} \sum_{b \in \F_q^*}S_m(a, b). $$
By equation (\ref{sum}), one computes
$$\sum_{b\in \F_q^*} S_m(a, b)= \sum_{v \neq 0} \psi(-a v) \sum_{\chi} \sum_{b\in \F_q^*}\bar{\chi}(bv^{m})
        G(\chi, \psi)^{m} = (-1)^m(q-1)\sum_{v \neq 0} \psi(-a v).$$
        This number is $(-1)^{m+1}(q-1)$ if $a\not=0$, and $(-1)^m(q-1)^2$ if $a=0$.
        It follows that we have the following formula for $\Nm_{B^*}(a)$.


\begin{prop}\label{prop-tr} Let $B$ be a finite semi-simple algebra over $\F_q$ of degree $n=\sum_{i=1}^k d_i^2 n_i \geq 2$ as given in equation (\ref{B}).   Let $m=\sum_i d_in_i$ and $a \in \F_q$.
        For $a=0$, we have
        \begin{align*}
                \Nm_{B^*}(0)  = \frac{1}{q}
                \left( |B^*| + (-1)^{\sum_i d_i}(q-1)q^{\frac{n-m}{2}}\right).
        \end{align*}
        For $a\not=0$, we have
\begin{align*}
\Nm_{B^*}(a)  = \frac{1}{q}\left( |B^*| - (-1)^{\sum_i d_i}q^{\frac{n-m}{2}}\right).
\end{align*}

        \end{prop}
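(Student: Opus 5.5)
The plan is to sum Lemma \ref{lem10} over $b\in\F_q^*$, which is exactly the computation set up just before the statement. Since $\Nm_{B^*}(a)=\sum_{b\in\F_q^*}\Nm_B(a,b)$, Lemma \ref{lem10} gives
$$\Nm_{B^*}(a)=\frac{|B^*|}{q}+\frac{(-1)^{m-\sum_i d_i}q^{\frac{n-m}{2}}}{q(q-1)}\sum_{b\in\F_q^*}S_m(a,b).$$
Everything therefore reduces to evaluating $\sum_b S_m(a,b)$ in closed form.

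To evaluate it, I would insert the definition (\ref{sum}) of $S_m(a,b)$ and interchange the sums, putting the sum over $b$ innermost. For fixed $v$ and $\chi$,
$$\sum_{b\in\F_q^*}\bar\chi(bv^m)=\bar\chi(v^m)\sum_b\bar\chi(b).$$
By orthogonality this vanishes unless $\chi$ is trivial, in which case it equals $q-1$. Only the trivial character survives, and it contributes $G(1,\psi)^m=(-1)^m$. Hence
$$\sum_b S_m(a,b)=(-1)^m(q-1)\sum_{v\neq 0}\psi(-av).$$
The inner sum is $q-1$ if $a=0$ and $-1$ if $a\neq 0$, since $\psi$ is nontrivial and the sum of $\psi$ over all of $\F_q$ is zero.

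Substituting back:
\begin{itemize}
\item For $a=0$, the correction term is $(-1)^{m-\sum_i d_i}(-1)^m q^{\frac{n-m}{2}}(q-1)^2/(q(q-1))$. Since $(-1)^{2m}=1$, this simplifies to $(-1)^{\sum_i d_i}(q-1)q^{\frac{n-m}{2}}/q$.
\item For $a\neq0$, the correction term is $-(-1)^{\sum_i d_i}q^{\frac{n-m}{2}}/q$.
\end{itemize}
These are the two claimed formulas.

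I do not expect a real obstacle; only the sign bookkeeping needs care. The points to check are the exponent $(-1)^{m-\sum d_i}\cdot(-1)^m=(-1)^{\sum d_i}$ and the Gauss sum $G(\chi_0,\psi)=-1$ for the trivial character. As a sanity check, I would verify that summing the two formulas over all $a$ gives $|B^*|$: indeed $(q-1)c-(q-1)c=0$ for the correction terms.
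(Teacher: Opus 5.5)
Your proposal is correct and follows the paper's own argument. Like the paper, you sum Lemma \ref{lem10} over $b\in\F_q^*$ and use orthogonality so that only the trivial character survives, which gives $\sum_b S_m(a,b)=(-1)^m(q-1)\sum_{v\neq 0}\psi(-av)$. Your sign bookkeeping, $(-1)^{m-\sum_i d_i}(-1)^m=(-1)^{\sum_i d_i}$, also matches.
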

In the case $B= M_{d}(\F_q)$, this recovers Kim \cite[Theorem 6.2]{Ki} and Li-Hu \cite[Theorem 3.1]{LH}.

\begin{remark}\label{prob1} Given a polynomial $f(t)\in \F_q[t]$ of degree $r\geq 1$ with $(r, p)=1$.
A harder problem is to count the number of elements $x \in B$
such that ${\rm Tr}_B(f(x))=a \in \F_q$. Namely, for $a \in \F_q$, let
$$\Nm_{\{B, f\}}(a): = \# \{ x \in B \ | \ {\rm Tr}_B(f(x)) = a\}.
$$
 In the geometric case $B =\F_q^n$, it is clear that
 $$\Nm_{\{\F_q^n, f\}}(a): = \# \{ (x_1, \cdots, x_n) \in \F_q^n \ | \ f(x_1)+ \cdots +f(x_n) = a\}.
$$
If the affine hypersurface $f(x_1)+ \cdots +f(x_n) = a$ is smooth (its infinite part is automatically smooth since $(r, p)=1$), then Deligne's well known
theorem gives the square root estimate
\begin{align}\label{01}
\left| \Nm_{\{\F_q^n, f\}}(a) - q^{n-1}\right| \leq (r-1)^n q^{\frac{n-1}{2}}.
\end{align}
The smooth condition cannot be dropped, for instance, the estimate (\ref{01}) fails if $f(x)=x^r$ with $r>1$ and $a=0$.

In the non-split case $B = \F_{q^n}$, the number $\Nm_{\{\F_{q^n}, f\}}(a)$ is $1/q$ times the number of $\F_{q^n}$-rational points on the Artin-Schreier
curve $y^q - y = f(x) - \alpha$, where $\alpha$ is any element in $\F_{q^n}$ such that ${\rm Tr}_n(\alpha) = a$. The well known Weil bound gives
\begin{align}\label{1}
\left| \Nm_{\{\F_{q^n}, f\}}(a) - q^{n-1}\right| \leq (r-1)(q-1) q^{\frac{n-2}{2}} \leq (r-1)q^{\frac{n}{2}}.
\end{align}
This is not a square root estimate for large $q$.
The number $\Nm_{\{\F_{q^n}, f\}}(a)$ is systematically studied in Rojas-Leon etc \cite{RW11, RL12, RL13} using Deligne's theorem,
proving results which are square root estimates and hence stronger than the Weil
bound (\ref{1}) when $q$ is large compared with $r$. Roughly speaking, one gets an extra $\sqrt{q}$ saving for the error term
if $f(t)$ is square-free and either $r$ is odd or the hypersurface $f(x_1)+\cdots +f(x_n)-a=0$ is smooth.
It would be interesting to extend these results to $\Nm_{\{B, f\}}(a)$ for
general semi-simple algebra $B$ over $\F_q$.
 \end{remark}

 \begin{remark}
 For $a \in \F_q$ and $b \in \F_q^*$, one can similarly define
$$\Nm_{\{B, f\}}(a, b): = \{ x \in B^* \ | \ {\rm Tr}_B(f(x)) = a, \ {\rm N}_B(x)=b\}.
$$
The main result of this paper, Theorem \ref{thm}, gives a uniform square root estimate for $\Nm_{\{B, f\}}(a, b)$ in the case $f(t)$ is a linear polynomial. It would be   interesting to extend this uniform estimate to higher degree polynomial $f(t)$. In the geometric case $B = \F_q^n$, the number
$\Nm_{\{\F_q^n, f\}}(a, b)$ is the number of $\F_q$-rational points on the following toric hypersurface
$$f(x_1) + \cdots + f(x_{n-1}) + f\bigg(\frac{b}{x_1\cdots x_{n-1}}\bigg) - a = 0.$$
The subtlety of the problem is that this hypersurface is singular for some $a$. So, it is probably
unreasonable to expect a square root estimate, which is uniform in $a \in \F_q$.
In some non-trivial cases, one can expect such uniform estimate for all non-zero $a\in \F_q^*$.
As mentioned before, the answer is yes if $r=\deg(f)=1$.
\end{remark}

\section{Proof of Theorem \ref{K}}

Let $B$ be a finite semi-simple algebra over $\F_q$ of degree $n=\sum_i d_i^2n_i\geq 2$.
Let $\psi: \F_q \longrightarrow \C$ be a non-trivial additive character.
For $b\in \F_q^*$,  recall that the Kloosterman sum over $B$ is defined by
$$K_{B}(b): = \sum_{x \in B^*, \ \mathrm{N}_{B}(x)=b} \psi\left({\rm Tr}_B(x)\right).$$
Similar to the trace-norm reduction in Lemma \ref{lem0}, we have the following reduction for exponential sum $K_B(b)$.
\begin{lemma}\label{lem2}
Let $B$ be a finite semi-simple algebra over $\F_q$ of degree $n=\sum_{i=1}^k d_i^2 n_i \geq 2$ as given in equation (\ref{B}) with $m=\sum_{i=1}^k d_in_i$.
For $b\in\F_q^*$, we have
\begin{align}\label{kl5}
K_{B}(b) =  (-1)^{m - \sum_i d_i}q^{\frac{n-m}{2}} K_{\F_q^m}(b).
                \end{align}
                \end{lemma}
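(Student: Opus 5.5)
Mirroring the proof of Lemma \ref{lem10}, I would expand $K_B(b)$ by multiplicative-character orthogonality and then apply Eichler's formula together with Hasse--Davenport. Let $x_i$ run over ${\rm GL}_{d_i}(\F_{q^{n_i}})$ and $\chi$ over the characters of $\F_q^*$. Detecting the norm condition gives
\begin{align*}
(q-1)K_B(b) &= \sum_{\chi}\bar\chi(b)\sum_{(x_1,\cdots,x_k)}\prod_{i=1}^k \psi_{n_i}({\rm tr}(x_i))\,\chi_{n_i}({\rm det}(x_i))\\
&= \sum_{\chi}\bar\chi(b)\prod_{i=1}^k G_{{\rm GL}_{d_i}(\F_{q^{n_i}})}(\chi_{n_i},\psi_{n_i}).
\end{align*}
The product splits because $\psi({\rm Tr}_B(x))=\prod_i \psi_{n_i}({\rm tr}(x_i))$ and $\chi({\rm N}_B(x))=\prod_i\chi_{n_i}(\det x_i)$. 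Unlike in (\ref{eq1}), there is no additive variable $v$ to absorb here, so this step is simpler.

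Next I would insert the consequence of Eichler's formula and the Hasse--Davenport relation already derived in Section 2,
$$G_{{\rm GL}_{d_i}(\F_{q^{n_i}})}(\chi_{n_i},\psi_{n_i})=(-1)^{d_i(n_i-1)}q^{n_i{d_i\choose 2}}G(\chi,\psi)^{d_in_i}.$$
Multiplying over $i$, the sign becomes $(-1)^{m-\sum_i d_i}$. The power of $q$ becomes $q^{\sum_i n_i{d_i\choose 2}}=q^{(n-m)/2}$, since $\sum_i n_id_i(d_i-1)/2=(n-m)/2$. The Gauss sums combine to $G(\chi,\psi)^m$. Therefore
$$(q-1)K_B(b)=(-1)^{m-\sum_i d_i}q^{\frac{n-m}{2}}\sum_\chi\bar\chi(b)G(\chi,\psi)^m.$$

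The right-hand sum depends only on $m$, $b$ and $\psi$. To identify it I would specialize the identity to $B=\F_q^m$, where $k=m$, all $d_i=n_i=1$ and $n=m$. There the sign and the power of $q$ are both trivial, so $(q-1)K_{\F_q^m}(b)=\sum_\chi\bar\chi(b)G(\chi,\psi)^m$. This can also be checked directly by expanding the $m$ Gauss sums. Substituting this back yields (\ref{kl5}).

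No step is a genuine obstacle, since Eichler's formula is assumed. The only points needing care are the exponent bookkeeping and confirming that Eichler's formula (stated for ${\rm GL}_d(\F_q)$) applies over $\F_{q^{n_i}}$ with the lifted characters $\chi_{n_i},\psi_{n_i}$. That is immediate, because $\chi_{n_i}$ is a multiplicative character and $\psi_{n_i}$ a nontrivial additive character of $\F_{q^{n_i}}$. Theorem \ref{K} then follows from Deligne's bound (\ref{K-clas}) with $n$ replaced by $m$: $|K_B(b)|\le q^{(n-m)/2}\,m\,q^{(m-1)/2}=mq^{(n-1)/2}$.
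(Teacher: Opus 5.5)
Your proof is correct and uses the same ingredients as the paper: Eichler's formula plus Hasse--Davenport, leading to the identical intermediate expression $\frac{(-1)^{m-\sum_i d_i}q^{(n-m)/2}}{q-1}\sum_\chi\bar\chi(b)G(\chi,\psi)^m$, followed by the same expansion of the Gauss sums. The only difference is how you reach that expression: the paper writes $K_B(b)=\sum_a \Nm_B(a,b)\psi(a)$ and substitutes Lemma \ref{lem10}, so that the $a$-sum collapses the $v$-sum to $v=1$, whereas you compute it directly using multiplicative orthogonality alone, which is slightly more self-contained.
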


\begin{proof}
By definition, we can rewrite
$$K_{B}(b)= \sum_{a\in \F_q} \Nm_B(a,b)\psi(a).$$
By Lemma \ref{lem10}, one computes
\begin{align*}
K_{B}(b) &= \sum_{a\in \F_q} \left( \frac{|B^*|}{q(q-1)}+ \frac{ (-1)^{m - \sum_i d_i}q^{\frac{n-m}{2}}}{q(q-1)} S_m(a, b)\right) \psi(a) \nonumber\\
        &=  \frac{(-1)^{m - \sum_i d_i}q^{\frac{n-m}{2}}}{q(q-1)} \sum_{v \in \F_q^*}\sum_{a \in \F_q} \psi( a-av) \sum_{\chi} \bar{\chi}(bv^m)
        G(\chi,\psi)^{m} \nonumber \\
        &=  \frac{(-1)^{m - \sum_i d_i}q^{\frac{n-m}{2}}}{q-1}\sum_{\chi} \bar{\chi}(b)
        G(\chi,\psi)^{m} \nonumber \\
        &=  \frac{(-1)^{m - \sum_i d_i}q^{\frac{n-m}{2}}}{q-1}\sum_{y_1, \cdots, y_m \in \F_q^*}
        \sum_{\chi} \chi\bigg(\frac{y_1\cdots y_m}{b}\bigg) \psi(y_1+\cdots +y_m) \nonumber \\
        &=  (-1)^{m - \sum_i d_i}q^{\frac{n-m}{2}}\sum_{y_i \in \F_q^*, \ y_1\cdots y_m = b}\psi(y_1+\cdots +y_m) \nonumber \\
        &=  (-1)^{m - \sum_i d_i}q^{\frac{n-m}{2}} K_{\F_q^m}(b).\qedhere
                \end{align*}
                \end{proof}
Note that $K_{\F_q^m}(b)$ is the classical hyper-Kloosterman sum in (\ref{K-clas}). Deligne's theorem gives the
bound $|K_{\F_q^m}(b)| \leq m q^{(m-1)/2}$. It follows that
$$|K_B(b)| \leq m q^{\frac{n-m}{2}} q^{\frac{m-1}{2}} = m q^{\frac{n-1}{2}}.$$
Theorem \ref{K} is proved.

\begin{remark}
For a finite semi-simple algebra $B$ over $\F_q$ of degree $n=\sum_{i=1}^k d_i^2 n_i $ as given in equation (\ref{B}) with $m=\sum_{i=1}^k d_in_i$,
one can consider the much simpler full sum
$$K_{B^*}: = \sum_{x \in B^*} \psi({\rm Tr}_B(x)) = \sum_{a\in \F_q} \Nm_{B^*}(a)\psi(a).$$
By Proposition \ref{prop-tr}, we obtain
$$K_{B^*} = \frac{1}{q}\left( |B^*| - (-1)^{\sum_i d_i} q^{\frac{n-m}{2}}\right) \sum_{a\in \F_q} \psi(a) + (-1)^{\sum_i d_i} q^{\frac{n-m}{2}} = (-1)^{\sum_i d_i} q^{\frac{n-m}{2}}.$$
\end{remark}

\begin{remark} A much more interesting problem is to estimate the following higher degree sum
\begin{align}
K_{\{B,f\}}(b): = \sum_{x \in B^*, \ {\rm N}_{B}(x)=b} \psi\left({\rm Tr}_B(f(x))\right), \ b \in \F_q^*,
\end{align}
where $f(t)\in \F_q[t]$ is a polynomial of degree $r$ not divisible by $p$. When $f(t)$ is a linear polynomial,
the sum is reduced to the Kloosterman sum over $B$ studied in Theorem \ref{K}. It would be interesting to prove
a similar square root estimate for $K_{\{B,f\}}(b)$ for higher degree polynomial $f(t)$. For non-commutative
semi-simple algebra $B$ over $\F_q$, we do not know a good estimate for $K_{\{B,f\}}(b)$.

\end{remark}

\begin{remark} We now explain how to obtain a square root estimate for the higher degree sum $K_{\{B,f\}}(b)$ when $B$ is a finite \'etale agebra over $\F_q$,
where $f(t)\in \F_q[t]$ is any polynomial of degree $r$ not divisible by $p$.

In the split case $B =\F_q^n$,  applying the work of
Adolphson-Sperber \cite[Theorem 4.2]{AS} and Denef-Loeser \cite{DL} for toric exponential sums, one obtains the following estimate:

\begin{align}\left| K_{\{\F_q^n,f\}}(b): = \sum_{x_1, \cdots, x_n \in {\F_q^*}, \ x_1\cdots x_n=b} \psi\left(f(x_1)+\cdots + f(x_n)\right)\right| \leq nr^{n-1} q^{\frac{n-1}{2}}, \ b\in \F_q^*.
\end{align}
This is because the sum $ K_{\{\F_q^n,f\}}(b)$ becomes the toric exponential sum of the following $(n-1)$-variable Laurent polynomial
$$f(x_1)+\cdots + f(x_{n-1}) + f\bigg(\frac{b}{x_1\cdots x_{n-1}}\bigg)$$
which is easily seen to be non-degenerate with respect to its Newton polytope in $\mathbb{R}^{n-1}$.

In the general finite \'etale case $B= \prod_{i=1}^k \F_{q^{n_i}}$ of degree $n=\sum n_i$, the same estimate
\begin{align}\label{es1}
\left| K_{\{B,f\}}(b): = \sum_{x \in B^*, \ {\rm Norm}(x)=b} \psi\left({\rm Tr}_B(f(x))\right)\right| \leq n r^{n-1} q^{\frac{n-1}{2}}, \ b \in \F_q^*
\end{align}
holds by a twisting or Weil descent argument which essentially reduce the problem to the split case.
This is already a good estimate.
\end{remark}

\begin{remark}
The estimate in (\ref{es1}) can be improved in some cases.  In the special case when $B= \F_{q^n}$ is a field,  taking $\beta \in \F_{q^n}^*$ with ${\rm N}_n(\beta)=b$, one finds that the sum $K_{\{B,f\}}(b)$ becomes the
following one variable sum over $\F_{q^n}^*$:
\begin{align}
K_{\{\F_{q^n},f\}}(b): = \frac{1}{q-1}\sum_{x \in \F_{q^n}^*} \psi\left({\rm Tr}_n (f(\beta x^{q-1}))\right).
\end{align}
The Weil bound for one variable exponential sum over $\F_{q^n}^*$ gives the estimate
\begin{align}\label{es5}
\left| K_{\{\F_{q^n},f\}}(b) \right| \leq r q^{\frac{n}{2}}.
\end{align}
This is not a square root estimate. It is better than the estimate (\ref{es1}) if $q < n^2 r^{2n-2}$, but worse than the estimate (\ref{es1}) if $q > n^2 r^{2n-2}$.
Roughly speaking, for large $q$,  the estimate (\ref{es1}) gives an extra $\sqrt{q}$ saving over the Weil bound (\ref{es5}).
Remarkably, for large $q$ and most $f$, the following even stronger estimate is
obtained in Rojas-Leon \cite[Example 7.22]{RL12}:
\begin{align}
\left|K_{\{\F_{q^n},f\}}(b): = \frac{1}{q-1}\sum_{x \in \F_{q^n}^*} \psi\left({\rm Tr}_n (f(\beta x^{q-1}))\right) \right| \leq C_{r,n} q^{\frac{n-1}{2}}, \ b \in \F_q^*,
\end{align}
where $f(t)$ is assumed not to be of the form $g(t^e)$ for $e>1$ and $g(t) \in \F_q[t]$.
The constant $C_{r, n}$ \cite[Remark 7.23]{RL12}\cite[Remark 8]{RL13b} is given by
\begin{align}
C_{r, n} = \frac{n}{r} \sum_{i=0}^{n-1}{n+r-i-1\choose n}{n-1\choose i}. 
\end{align}
The constant $C_{r, n}$ is significantly smaller than the constant $ n r^{n-1}$ for $r>1$.
The theory developed in Rojas-Leon \cite{RL12}
is applicable to the general commutative case when $B$ is a finite \'etale algebra over $\F_q$.
It would be interesting to work out such improvements for general finite \'etale algebra $B$ over $\F_q$,
or even explore the non-commutative case when $B$ is a finite semi-simple algebra over $\F_q$.
\end{remark}

\section{Product-trace version}

Let $B$ be a semi-simple algebra of dimension $n$ over $\F_q$ as before. We now consider the product-trace version of the Kloosterman sum and its counting analogue over $B$. This is motivated by the matrix Kloosterman sum studied
in Zelingher \cite{Ze}.

Let $\psi$ be a non-trivial additive character of $\F_q$.
Given $x \in B^*$ and integer $r\geq 2$.
Define the product-trace Kloosterman sum over $B$ by
$$K(B, r, x, \psi):=\sum_{g_1\cdots g_r=x, ~g_i \in B^*} \psi ({\rm Tr}_B(g_1+\cdots +g_r)).$$
In the case $B=\F_q$, this sum is exactly the classical hyper-Kloosterman sum.
It is natural to expect that there is a square root cancellation in the sum
$K(B, r, x, \psi)$ for general $B$. This is not true for arbitrary $x \in B^*$, but turns out to be true for regular element
$x \in B^*$ as defined below.

\begin{definition}
An element $x \in {\rm GL}_d(\bar{\F}_q)$ is called regular if the number of
independent eigenvectors of $x$ over $\bar{\F}_q$ is equal to the number of distinct eigenvalues
of $x$ over $\bar{\F}_q$.  An element $x=(x_1, \cdots, x_k) \in B^* = {\rm GL}_{d_1}(\F_{q^{n_1}}) \times \cdots \times {\rm GL}_{d_k}(\F_{q^{n_k}})$ is called regular
if each $x_i \in {\rm GL}_{d_i}(\F_{q^{n_i}})$ is regular.

\end{definition}

The results in \cite{Ze} imply the following

\begin{theorem}\label{conj1} Let $B = M_{d_1}(\F_{q^{n_1}}) \times \cdots \times M_{d_k}(\F_{q^{n_k}})$ be a semi-simple algebra of dimension $n$ over $\F_q$. Let $x=(x_1, \cdots, x_k) \in B^*$ be regular. Then for integer $r\geq 2$, we have
$$|K(B, r, x, \psi)|  \leq r^{d_1+\cdots +d_k} q^{\frac{(r-1)n}{2}}.$$
\end{theorem}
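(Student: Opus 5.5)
The plan is to reduce to a single matrix factor and then invoke Zelingher's estimate for matrix Kloosterman sums over $\mathrm{GL}_d(\F_{q^{n_i}})$ at regular elements.

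\emph{Step 1: factorization.} Since $B^*=\prod_i \mathrm{GL}_{d_i}(\F_{q^{n_i}})$ and multiplication in $B$ is componentwise, the condition $g_1\cdots g_r=x$ splits into the independent conditions $g_{1,i}\cdots g_{r,i}=x_i$, one for each $i$. Likewise ${\rm Tr}_B=\sum_i \Tr_{n_i}\circ{\rm tr}$. With $\psi_{n_i}=\psi\circ\Tr_{n_i}$, this gives
$$K(B,r,x,\psi)=\prod_{i=1}^k K\big(M_{d_i}(\F_{q^{n_i}}), r, x_i, \psi_{n_i}\big),$$
where each factor is the matrix Kloosterman sum
$$\sum_{g_1\cdots g_r=x_i,\ g_j\in \mathrm{GL}_{d_i}(\F_{q^{n_i}})}\psi_{n_i}\big({\rm tr}(g_1+\cdots+g_r)\big)$$
over the field $\F_{q^{n_i}}$. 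The character $\psi_{n_i}$ is nontrivial because the trace map $\Tr_{n_i}$ is surjective.

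\emph{Step 2: the single-factor bound.} It suffices to show that for regular $x_i\in\mathrm{GL}_{d}(\F_{Q})$, with $Q=q^{n_i}$ and $d=d_i$,
$$|K(M_d(\F_Q),r,x_i,\psi_{n_i})|\le r^{d}\,Q^{(r-1)d^2/2}.$$
Multiplying these bounds over $i$ gives $r^{\sum_i d_i}\, q^{(r-1)\sum_i n_i d_i^2/2}=r^{\sum d_i}q^{(r-1)n/2}$, which is the claimed estimate.

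\emph{Step 3: citing Zelingher.} The main work is checking that \cite{Ze} gives exactly this single-factor bound for regular elements. My understanding of Zelingher's result is as follows. Decompose $x_i$ by its characteristic polynomial into Jordan-type blocks corresponding to irreducible factors $f_j$ of degrees $a_j$ with multiplicities $e_j$. Regularity means each eigenvalue has a single Jordan block. For such $x_i$ the matrix Kloosterman sum equals $Q^{(r-1)\binom{d}{2}}$ times a product over the eigenvalues, up to sign and possibly unipotent contributions. Each eigenvalue contributes a hyper-Kloosterman sum over $\F_{Q^{a_j}}$ with $r$ variables, or a polynomial in it attached to the Jordan block. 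Concretely, \cite{Ze} identifies the sum with $Q^{(r-1)\binom d2}$ times the value at $x_i$ of an explicit class function built from the eigenvalues of the Kloosterman sheaf $\mathrm{Kl}_r$ (via Deligne's theory).

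Given that identification, Deligne's bound supplies the estimate. The Kloosterman sheaf has rank $r$ and is pure of weight $r-1$, so the class function has absolute value at most $r^{d}Q^{(r-1)d/2}$: it involves $r^{\sum_j a_j e_j}=r^d$ terms, each of size $Q^{(r-1)d/2}$. Multiplying, $Q^{(r-1)\binom d2}\cdot Q^{(r-1)d/2}=Q^{(r-1)d^2/2}$, as needed.

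The main obstacle is precisely extracting this form from \cite{Ze}: the nonsemisimple regular case, where Jordan blocks have size greater than $1$, and verifying that the count of Frobenius eigenvalues is at most $r^d$ rather than something larger. For the semisimple regular case I would first check the formula directly using the $\mathrm{GL}_d$ Hasse--Davenport and Eichler-type identities already exploited in Section 2. I would then handle Jordan blocks by Zelingher's reduction to the centralizer, which for a regular element is abelian.
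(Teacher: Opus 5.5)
Your Steps 1 and 2 are the paper's reduction. The paper also factors $K(B,r,x,\psi)$ over the simple components and passes from $M_{d}(\F_{q^{n_i}})$ viewed over $\F_q$ to the same algebra over $\F_{q^{n_i}}$ with $\psi_{n_i}=\psi\circ\Tr_{n_i}$.

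The gap is Step 3. It carries all the content, and you leave it as a guess about the shape of Zelingher's formula. The paper closes it by quoting a precise input, \cite[Corollary 4.9]{Ze}: for regular $x\in\mathrm{GL}_d(\F_q)$ with characteristic polynomial $\prod_{j=1}^s f_j^{b_j}$ and $\deg f_j=a_j$,
\[
|K(M_d(\F_q),r,x,\psi)|\le q^{\frac{(r-1)d^2}{2}}\prod_{j=1}^s\binom{b_j+r-1}{b_j}.
\]
It then uses two elementary estimates: $\binom{b+r-1}{b}=\prod_{i=1}^{b}\frac{r-1+i}{i}\le r^{b}$, and $\sum_j b_j\le\sum_j a_jb_j=d$.

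Your count of $r^{\sum_j a_je_j}$ terms is not the right multiplicity. A factor $f_j$ of degree $a_j$ with $e_j=1$ contributes a single hyper-Kloosterman sum over $\F_{Q^{a_j}}$. That is $r$ Frobenius eigenvalues, not $r^{a_j}$, each of absolute value $Q^{a_j(r-1)/2}$. For $f_j^{e_j}$ the multiplicity factor in the bound is $\binom{e_j+r-1}{e_j}$. Your final inequality survives only because $\binom{e+r-1}{e}\le r^{e}\le r^{ae}$, and that is exactly the step you would still need to supply once the correct factor is in hand.

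Separately, your fallback for the semisimple regular case would fail. You propose checking the formula directly with Eichler's formula and Hasse--Davenport as in Section 2. But Eichler's formula only evaluates $\sum_g\chi(\det g)\psi(\mathrm{tr}\,g)$, and the characters $\chi\circ\det$ only detect $\det(g_1\cdots g_r)$. They therefore compute the sum of $K(M_d(\F_q),r,\cdot,\psi)$ over an entire determinant fibre, which equals $q^{r\binom{d}{2}}$ times a hyper-Kloosterman sum in $rd$ variables, as in Section 3. They do not give its value at a fixed regular $x$. Isolating the conjugacy class of $x$ requires the Gauss sums $\sum_g\chi_\pi(g)\psi(\mathrm{tr}\,g)$ for all irreducible representations $\pi$ of $\mathrm{GL}_d$, together with the character values $\chi_\pi(x)$. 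That is the representation-theoretic work carried out in \cite{Ze}. So the citation cannot be bypassed this way, and it has to be the precise bound above.
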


\begin{proof}
In the \'etale case $B=\prod_{i=1}^k \F_{q^{n_i}}$,
each $x=(x_1, \cdots, x_k) \in B^* = \prod_{i=1}^k \F_{q^{n_i}}^*$ is automatically
regular. In this case, one checks that
$$K(B, r, x, \psi)=\prod_{i=1}^k \sum_{y_{i1}, \cdots, y_{i(r-1)} \in \F_{q^{n_i}}^*}
\psi \left({\rm Tr}_{n_i}(y_{i1}+\cdots +y_{i(r-1)} +\frac{x_i}{y_{i1}\cdots y_{i(r-1)}}
)\right).$$
By Deligne's theorem, we deduce
$$|K(B, r, x, \psi)| \leq r^k q^{\frac{(r-1)(n_1+\cdots +n_k)}{2}} = r^k q^{\frac{(r-1)n}{2}}.$$
Thus, Theorem \ref{conj1} holds when $B$ is \'etale.

In the matrix algebra case $B= M_{d}(\F_q)$, the sum $K(M_{d}(\F_q), r, x, \psi)$ is exactly the matrix Kloosterman sum studied in \cite{Ze}.
Let $m_x(t)$ be the characteristic polynomial of $x \in {\rm GL}_d(\F_q)$. Let
$$m_x(t) = \prod_{i=1}^s f_i(t)^{b_i}$$
be the standard factorization of $m_x(t)$ over $\F_q$, where each $b_i\geq 1$, each
$f_i(t)$ is a monic  irreducible polynomial over $\F_q$ of degree $a_i\geq 1$, and the
$f_i(t)$'s are distinct. Since $x\in {\rm GL}_d(\F_q)$ is an regular element, Zelingher \cite[Corollary 4.9]{Ze} gives the following estimate
$$|K(M_d(\F_q), r, x, \psi)| \leq q^{\frac{(r-1)d^2}{2}}\prod_{j=1}^s{b_j+r-1\choose b_j}.$$
Now, for integer $b\geq 1$, 
$${b+r-1\choose b} = \frac{b+r-1}{b} \frac{b-1+r-1}{b-1}\cdots \frac{r}{1} \leq r^b.$$
Since $b_1+\cdots + b_s \leq b_1a_1+\cdots +b_sa_s = d$, it follows that
$$|K(M_d(\F_q), r, x, \psi)| \leq q^{\frac{(r-1)d^2}{2}}\prod_{j=1}^s r^{b_j} \leq r^{d} q^{\frac{(r-1)d^2}{2}}.$$
The theorem is true when $B= M_{d}(\F_q)$.

If now $B= M_{d}(\F_{q^m})$ as $\F_q$-algebra, we let $B_m= B$ as $\F_{q^m}$-algebra
and $\psi_m = \psi \circ {\rm Tr}_{\F_{q^m}}/\F_q$ (a non-trivial additive character of $\F_{q^m}$), then one checks that
$K(B, r, x, \psi) = K(B_m, r, x, \psi_m)$. It follows that for regular $x \in {\rm GL}_d(\F_{q^m})$,
$$|K(B, r, x, \psi)| = |K(B_m, r, x, \psi_m)| \leq  r^{d}q^{\frac{(r-1)d^2m}{2}}.$$
In the general case
$$B = M_{d_1}(\F_{q^{n_1}}) \times \cdots \times M_{d_k}(\F_{q^{n_k}}),$$
for $x=(x_1, \cdots, x_k) \in B^*$ with $x_i \in {\rm GL}_{d_i}(\F_{q^{n_i}})$,
one checks that
$$K(B, r, x, \psi) = \prod_{i=1}^k K(M_{d_i}(\F_{q^{n_i}}), r, x_i, \psi).$$
Since $x$ is regular, each $x_i$ is regular. By the above matrix algebra case, we conclude that
$$|K(B, r, x, \psi)| \leq \prod_{i=1}^k (r^{d_i}q^{\frac{(r-1)d_i^2n_i}{2}}) \leq r^{d_1+\cdots +d_k} q^{\frac{(r-1)n}{2}}.$$
The theorem is proved.
\end{proof}

In view of the norm-trace estimate in Theoreom \ref{thm}, it would be interesting to study the following counting version of the product-trace problem. For $x \in B^*$, $a \in \F_q$ and $r\geq 2$,
define
$$N(B, r, x,a): = \#\{ (g_1,\cdots, g_r) \in (B^*)^r ~|~ g_1\cdots g_r = x, ~ {\rm Tr}_B(g_1+\cdots +g_r) =a\}.$$
Heuristically,
the number $N(B, r, x, a)$ should be roughly $|B^*|^{r-1}/q$,  with hopefully a square root error term for regular $x\in B^*$. Namely, one hopes that the following holds.

\begin{conjecture}\label{conj2} Let $B$ be a semi-simple algebra of dimension $n$ over $\F_q$. For $a\in \F_q^*$,
regular $x \in B^*$ and integer $r\geq 2$, we have
$$\left|N(B, r, x, a) - \frac{|B^*|^{r-1}}{q}\right| \leq r^{d_1+\dots +d_k} q^{\frac{(r-1)n-1}{2}}.$$
\end{conjecture}
Note that the conjecture can fail in some cases when $a=0$. That is why we assume that $a \in \F_q^*$
is non-zero in the conjecture.

In the geometric case when $B=\F_q^n$, for $x=(x_1, \cdots, x_n) \in (\F_q^*)^n$ and $a\in \F_q^*$, we have
$$N(\F_q^n, r, x,a)= \#\left\{ y_{ij} \in \F_q^*, 1\leq i\leq r-1, 1\leq j\leq  n~|~ \sum_{j=1}^n\bigg(\sum_{i=1}^{r-1}y_{ij}+ \frac{x_j}{y_{ij}\cdots y_{(r-1)j}}\bigg)=a\right\}.$$
The method of $\ell$-adic cohomology and Kloosterman sheaf as explained in \cite{LW} 
can be generalized to prove
$$|N(\F_q^n, r, x, a) -\frac{(q-1)^{n(r-1)}}{q} | \leq r^n q^{\frac{(r-1)n-1}{2}}.$$
This shows that Conjecture \ref{conj2} is true in the geometric case when $B=\F_q^n$.

When $B=\F_{q^n}$ (a field extension), for $x \in \F_{q^n}^*$, we have
$$N(\F_{q^n}, r, x,a)= \#\left\{ (y_1, \cdots, y_{r-1}) \in (\F_{q^n}^*)^{r-1}~|~
{\rm Tr}_{\F_{q^n}/\F_q}\bigg(y_1+\cdots +y_{r-1}+  \frac{x}{y_{1}\cdots y_{r-1}}\bigg)=a\right\}.$$
It is not clear how to handle this for all $a\in \F_q^*$. The Weil descent argument can probably be used to
show that the square root estimate is true for most $a\in \F_q^*$ depending on $x$.
Here, we are hoping that the conjecture
is true uniformly for all non-zero $a\in \F_q^*$.

The reduction method in Zelingher \cite{Ze} via representation theory should reduce
the general non-commutative case to the commutative case with $B$ being \'etale. So, the field extension case when $B=\F_{q^n}$ seems to be
the most essential case.

\section*{Acknowledgements} It is a pleasure to thank E. Zelingher and Dingxin Zhang for discussions
on this topic, and the referee for helpful comments.


\begin{thebibliography}{99}

\bibitem{AS}
\newblock A. Adolphson and S. Sperber, 
\newblock Exponential sums and Newton polyhedra: cohomology and estimates,
\newblock \emph{Ann. Math.}, \textbf{130} (1989), 367-406.

\bibitem{EI} 
\newblock M.  Eichler, 
\newblock Allgemeine Kongruenz-Klasseneinteilungen der Ideale einfacher Algebren
        \"uber algebraischen Zahlk\"orpern und ihre L-Reihen. 
\newblock \emph{J. Reine Angew. Math.}, \textbf{179} (1937), 227 - 251.



\bibitem{MMM} 
\newblock G. L. Matthews, T.  Morrison, and A. W. Murphy, 
\newblock Curve-lifted codes for local recovery using lines, 
\newblock\emph{Des. Codes Cryptogr.}, \textbf{92} (2024), 3645-3664.



\bibitem{De}
\newblock P. Deligne, 
\newblock “Cohomologie \'{e}tale (SGA 4 $\frac{1}{2}$)", 
\newblock \emph{Lecture Notes in Mathematics}, Vol. 569,  Springer-Verlag, Berlin/Heidelberg/New York, 1977.


\bibitem{DL}
\newblock J. Denef and F. Loeser, 
\newblock Weights of exponential sums, intersection cohomology, and Newton polyhedra,
 \newblock  \emph{Invent. Math.}, Vol. \textbf{106} (1991), 275-294.


\bibitem{K0} 
\newblock N. Katz, Gauss Sums, 
\newblock \emph{Kloosterman Sums, and Monodromy Groups},
\newblock Princeton University Press, 1988.


\bibitem{Ka} 
\newblock N. Katz, 
\newblock Estimates for Soto-Andrade sums, 
\newblock \emph{J. Reine Angew. Math.}, \textbf{438} (1993), 143-161.


\bibitem{Ki} 
\newblock D. Kim, 
\newblock Gauss sums for general and special linear groups over a finite field,
\newblock \emph{Arch. Math.}, \textbf{69} (1997),  297-304.

\bibitem{Ki11} 
\newblock D. Kim, 
\newblock Codes associated with special linear groups and power moments of multi-dimensional Kloosterman sums,
\newblock  \emph{Annali di Matematica}, \textbf{190} (2011), 61-76.


 \bibitem{La} 
\newblock E. Lamprecht, 
\newblock Struktur und Relationen allgemeiner Gaußcher Summen in endlichen Ringen I, II,
\newblock  \emph{J. Reine Angew. Math.},  \textbf{197} (1957),  1-48.

        
\bibitem{LH} 
\newblock Y. Li and S. Hu, 
\newblock Gauss sums over some matrix groups, 
\newblock \emph{J.  Number Theory}, \textbf{132} (2012), 2967-2976.


\bibitem{LW} 
\newblock X. Lin and D. Wan, 
\newblock Counting elements with given trace and norm in étale algebras, 
\newblock \emph{International  J. Number Theory}, \textbf{21} (2025), 1955-1965.


\bibitem{Mo}
\newblock M. Moisio, 
\newblock Kloosterman sums, elliptic curves, and irreducible polynomials with prescribed trace and norm, 
\newblock \emph{Acta Arith.}, \textbf{132} (2008), 329-350.

        
\bibitem{MW10}
\newblock M. Moisio and D. Wan, 
\newblock On Katz's bound for the number of elements with given trace and norm, 
\newblock \emph{J. Reine Angew. Math.}, \textbf{638} (2010), 69-74.


\bibitem{RL12}
\newblock A. Rojas-Leon, 
\newblock Rationality of trace and norm $L$-functions,
 \newblock  \emph{Duke Math. J.}, \textbf{161} (2012),  1751-1795.

        
\bibitem{RL13}
\newblock A. Rojas-Leon, 
\newblock On the number of rational points on curves over finite fields with many automorphisms,
 \newblock       \emph{Finite Fields \& Appl.}, \textbf{19} (2013),  1-15.

\bibitem{RL13b}
\newblock A. Rojas-Leon, 
\newblock Local convolution of $\ell$-adic sheaves on the torus, 
 \newblock       \emph{Math Z.}, \textbf{274} (2013),  1211-1230. 

       
\bibitem{RW07}
\newblock A. Rojas-Leon and D. Wan,
\newblock  Moment zeta functions for toric Calabi-Yau hypersurfaces,
\newblock        \emph{Commun. Number Th. Phys.}, \textbf{1} (2007), 539-578.

        
\bibitem{RW11}
\newblock A. Rojas-Leon and D. Wan, 
\newblock Improvements of the Weil bound for Artin-Schreier curves,
 \newblock       \emph{Math. Ann.}, \textbf{351} (2011), 417–442.


\bibitem{Wa}
\newblock D. Wan, 
\newblock Lectures on zeta functions over finite fields, 
\newblock \emph{Higher Dimensional Geometry over Finite Fields}, D. Kaledin and Y. Tschinkel, eds., ISO Press (2008), 244-268.

        
\bibitem{Ze}
\newblock E. Zelingher, 
\newblock On matrix Kloosterman sums and Hall-Littlewood polynomials,
\newblock \emph{Trans. Amer. Math. Soc.}, \textbf{378} (2025), 3597–3623.




\end{thebibliography}
\end{document}